\documentclass[11pt]{amsart}
\usepackage[margin=1.15in]{geometry}
\usepackage{amssymb,mathtools}
\usepackage{booktabs}
\usepackage{enumitem}
\usepackage{microtype}
\usepackage{xcolor}
\usepackage[colorlinks=true,linkcolor=blue!50!black,citecolor=blue!50!black,urlcolor=blue!50!black]{hyperref}

\newtheorem{theorem}{Theorem}[section]
\newtheorem{lemma}[theorem]{Lemma}
\newtheorem{proposition}[theorem]{Proposition}
\newtheorem{corollary}[theorem]{Corollary}
\theoremstyle{definition}
\newtheorem{definition}[theorem]{Definition}
\newtheorem{construction}[theorem]{Construction}
\theoremstyle{remark}
\newtheorem{remark}[theorem]{Remark}
\numberwithin{equation}{section}

\newcommand{\F}{\mathbb{F}}
\newcommand{\Z}{\mathbb{Z}}
\newcommand{\Q}{\mathbb{Q}}
\newcommand{\C}{\mathbb{C}}
\newcommand{\eps}{\varepsilon}
\newcommand{\PG}{\mathrm{PG}}
\newcommand{\Mbar}{\overline{M}}
\DeclareMathOperator{\girth}{girth}

\title[Zero divisors in odd characteristic]{Zero divisors in group algebras of torsion-free groups in odd characteristic}

\author{S.V. Ivanov}
\address{Department of Mathematics\\
University of Illinois \\
Urbana\\   IL 61801\\ U.S.A.}
  
\date{October 7, 2026}

\begin{document}

\begin{abstract}
A recent preprint  \cite{OAI26} constructs a finitely presented torsion-free group $G$ with a finite two-dimensional classifying space such that $\F_2[G]$ contains nonzero zero divisors. We extend this to every odd prime $p$: there is a finitely presented torsion-free group $G_p$, again with a finite two-dimensional classifying space, such that $\F_p[G_p]$ contains nonzero elements $\alpha,\beta$ with $\alpha\beta=0$. Together with the case $p=2$, Kaplansky's zero-divisor conjecture therefore fails over every field of positive characteristic. The new ingredients are a signed version of the parity cancellation, made possible by a bipartite random model on one side; a type design built from the projective plane over $\F_{p^3}$ with $p$ additional letters, which makes every cross-intersection congruent to $1$ modulo $p$; and a squared-word decay estimate for this design, proved with an explicit test vector. The geometric part of the argument is taken from the characteristic-two construction. We record a line-by-line check of that part, together with the bookkeeping changes it needs. The characteristic-zero case remains open.
\end{abstract}

\maketitle

\section{Introduction}

Let $K$ be a field and $G$ a group. Kaplansky's zero-divisor conjecture asserts that the group algebra $K[G]$ has no nonzero zero divisors whenever $G$ is torsion-free. The question goes back to Higman's 1940 thesis \cite{Hig40a}, whose relevant part is reproduced by Sandling \cite{San81}, and it is Problem~6 in Kaplansky's 1956 list \cite{Kap57}. Torsion is a genuine obstruction: if $g$ has order $m>1$, then $(1-g)(1+g+\dots+g^{m-1})=0$.

The conjecture is known for large classes of groups. Higman proved it for locally indicable groups \cite{Hig40b}. Kropholler, Linnell and Moody proved it for torsion-free elementary amenable groups \cite{KLM88}, and Fisher and S\'anchez-Peralta recently treated torsion-free virtually compact special groups and torsion-free $3$-manifold groups \cite{FSP26}. Groups with the unique-product property also satisfy it, but torsion-free groups without that property have been known since Rips and Segev \cite{RS87}; see also \cite{Ste15,MS26}.

A recent preprint \cite{OAI26} disproves the conjecture over $\F_2$. It constructs, by a probabilistic method, a finitely presented torsion-free group $G$ admitting a finite two-dimensional classifying space, together with nonzero $\alpha,\beta\in\F_2[G]$ such that $\alpha\beta=0$. The cancellation in that construction is specific to characteristic two. Our main result removes this restriction.

\begin{theorem}\label{thm:main}
Let $p$ be an odd prime. There exist a finitely presented torsion-free group $G_p$, admitting a finite two-dimensional classifying space, and nonzero elements $\alpha,\beta\in\F_p[G_p]$ such that $\alpha\beta=0$.
\end{theorem}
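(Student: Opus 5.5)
We will follow the architecture of the characteristic-two construction in \cite{OAI26}, changing the algebraic core and keeping the geometric core. The group $G_p$ will be given by a random presentation $\langle x_1,\dots,x_m \mid R\rangle$ in which every relator has the form $u v^{-1}$. Here $u,v$ are products of a fixed number of letters chosen according to a combinatorial type design. We take $\alpha=\sum_{a\in A}\eps_a a$ and $\beta=\sum_{b\in B}\delta_b b$, where $A,B$ are finite sets of words, the signs $\eps_a,\delta_b\in\{\pm1\}\subset\F_p$, and $\alpha\beta=0$ is to be forced by the relators. The plan has two halves. The first is algebraic: choose the design and signs so that every product $ab$ appearing in $\alpha\beta$ is identified, in $G_p$, with a class of other products whose signed coefficients sum to $0$ in $\F_p$. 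The second is geometric: show that with positive probability the resulting presentation is aspherical, torsion-free, and satisfies $A\neq\emptyset$, $B\neq\emptyset$ in the sense that $\alpha,\beta\neq0$ in $\F_p[G_p]$.

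For the algebraic half, we make the random model bipartite on one side. The letters of $\beta$ split into two classes, and relators always pair a word from one class with a word from the other, so that $\delta$ is a proper $2$-colouring sign. Each coincidence class of products $ab$ then carries a signed count. We choose the type design from the points and lines of $\PG(2,p^3)$, adjoined with $p$ extra letters. The point is to make every cross-intersection (the number of pairs realising a given product) congruent to $1$ modulo $p$. Lines of $\PG(2,q)$ have $q+1$ points and two lines meet in exactly one point, with $q=p^3\equiv0 \pmod p$. Adding the $p$ extra letters adjusts the remaining intersection counts to $\equiv1$. With the bipartite signs, each class then contributes $1-1\equiv0$. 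This replaces the parity cancellation of \cite{OAI26} by a signed one, and should reduce to a finite counting check.

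For the geometric half, we import the small-cancellation / random-presentation analysis of \cite{OAI26} line by line. First, the random relators at the chosen density satisfy a local isoperimetric inequality, which gives hyperbolicity. Second, reduced van Kampen diagrams have no spherical components, so the presentation complex is aspherical and hence $G_p$ is torsion-free with a finite two-dimensional classifying space. Third, nonvanishing of $\alpha$ and $\beta$ follows from the fact that distinct short words in $A$ (respectively $B$) remain distinct in $G_p$. The bookkeeping changes come from the new alphabet size $p^3$-scale and the new relator lengths; these shift the density thresholds but not the structure of the argument.

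The main obstacle is the probabilistic input that the design needs: a decay estimate for the number of ``squared words'' (pairs of relator halves sharing long subwords). This estimate controls the expected number of bad diagrams. In \cite{OAI26} it comes from the characteristic-two design. Here we would first try to bound it spectrally, via the incidence operator of $\PG(2,p^3)$, whose nontrivial eigenvalues are of size $\sqrt{q}$. We would then exhibit an explicit test vector showing that the relevant quadratic form decays like $q^{-1/2}$ per letter, which is enough to beat the union bound at the chosen density.
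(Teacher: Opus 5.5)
There is a genuine gap. Your algebraic half misidentifies the cancellation mechanism, and as written it does not give $\alpha\beta=0$. The quantity that must be $\equiv1\pmod p$ is not ``the number of pairs realising a given product.'' The design says nothing about class sizes; in the paper's experiment most classes have size $\equiv2\pmod 3$. Nor does anything split a class as ``$1-1$.''

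What is needed is the step graph on $A'\times B'$, which joins $(x,y)$ to $(x',y')$ when $x$ and $y$ leave along a common letter $t$ to $x'$ and $y'$. The cones kill all closed walks, so $g_{x'}h_{y'}^{-1}=g_xh_y^{-1}$ and the product is constant on each component $C$. The \emph{degree} of $(x,y)$ is $\lvert S_x\cap S_y\rvert$, and this is the cross-intersection. Since $\eps$ alternates along edges of the bipartite graph $\Gamma_A$, the two darts of every step edge carry opposite signs. This gives the exact identity $\sum_{(x,y)\in C}\eps(x)\lvert S_x\cap S_y\rvert=0$ in $\Z$. If all degrees are $\equiv d\not\equiv0$, this forces $p\mid\sum_{(x,y)\in C}\eps(x)$.

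This signed handshake is the key new idea, and your proposal contains nothing equivalent. It also shows that only one side may carry alternating signs: your additional signs $\eps_a$ on the other side must be constant on the component, or the identity fails. The $p$ extras do not ``adjust'' intersections, since two lines already meet in $1$ or $q+1\equiv1$ points. They are needed because $v=q^2+q+1$ is odd, so the points alone admit no fixed-point-free involution. Used all-or-nothing and paired with $p$ special points, they contribute $0$ or $p\equiv0$ to each intersection.

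Your geometric half also describes the wrong framework (relators $uv^{-1}$, density, hyperbolicity). In the paper $G_p$ is $\pi_1$ of a rose with cones on the components of random immersed graphs. The changes to \cite{OAI26} come from the bipartite sampling: halved sub-matchings on $A$ in the switching and diameter lemmas, and $2$-coloured patterns with a bounded extra factor in the realization count. They do not come from relator lengths.

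The second gap is the decay estimate, where your heuristic is wrong. The relevant operator is $M_{t,u}=w(t,u)^2$, where $w(t,u)$ is the proportion of types containing $\bar t$ that also contain $u$; it is not the point--line incidence operator. Two distinct points lie on exactly one line, so every point-to-point turn has weight exactly $1/(q+1)$. The incidence structure and its $\sqrt q$ eigenvalues are therefore invisible. The spectral radius of $M$ is $1-\Theta(1/q)$: already the block on letters whose inverse is a non-special point has constant row sums $(q^2+q-p)/(q+1)^2$. Hence $\sum_WP(W)^2$ never decays like $q^{-h/2}$.

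The real danger is the extras. From $t$ with $\bar t\in E$, the next letter may be any of the other $p-1$ extras with weight $1$. This produces the loop $\mathcal E\to\mathcal S\to\mathcal E$ of weight $(p-1)^2/(q+1)^2$, which must be small compared with the gap $1/q$. That is why $q=p^3$ is taken: for $q=p$ the spectral radius of $\Mbar$ exceeds $1$. Your plan neither detects this nor explains the exponent.

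The paper lumps letters into $\mathcal E,\mathcal S,\mathcal O$ according to whether $\bar t$ is an extra, a special point or another point. It then checks $\Mbar\varphi\le(1-1/(4Q))\varphi$ entrywise for $\varphi=(p+1,1,1)$ and $Q=q+1$, using $p^2\le Q/3$. A rate of $1-1/(4Q)$ per letter suffices, because the bounded-pattern estimate only needs some fixed $\delta>0$ depending on $p$.

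Finally, for $\alpha,\beta\neq0$ you need only root separation: the root is the unique vertex of its component with trivial label. You do not need all words in $A$ to remain distinct.
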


Since $\F_p[G_p]\subseteq K[G_p]$ for every field $K$ of characteristic $p$, the same $\alpha$ and $\beta$ are zero divisors in $K[G_p]$. Combining this with \cite[Theorem~1.1]{OAI26} gives the following.

\begin{corollary}\label{cor:allp}
For every field $K$ of positive characteristic there is a finitely presented torsion-free group $G$, admitting a finite two-dimensional classifying space, such that $K[G]$ has nonzero zero divisors.
\end{corollary}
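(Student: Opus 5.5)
The corollary reduces to the two prime-field cases by passing to the prime subfield, so the plan is a short case split with no new group theory. Let $K$ have characteristic $p>0$, so $K$ contains a copy of $\F_p$. If $p=2$, take $G$ and nonzero $\alpha,\beta\in\F_2[G]$ with $\alpha\beta=0$ from \cite[Theorem~1.1]{OAI26}. If $p$ is odd, take $G=G_p$ and $\alpha,\beta\in\F_p[G_p]$ from Theorem~\ref{thm:main}. In both cases $G$ is finitely presented and torsion-free and has a finite two-dimensional classifying space. These are properties of $G$ alone and do not depend on the coefficient field.

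It remains to transport the zero divisors from $\F_p[G]$ to $K[G]$. The inclusion $\F_p\hookrightarrow K$ induces the map $\F_p[G]\to K[G]$ given by $\sum_g a_g g\mapsto\sum_g a_g g$. This map is a ring homomorphism, because multiplication in both group algebras is given by the same convolution formula $(\sum a_g g)(\sum b_h h)=\sum_{x}\bigl(\sum_{gh=x}a_gb_h\bigr)x$, and the inclusion $\F_p\to K$ respects the sums and products of coefficients. It is also injective, since $G$ is a $K$-basis of $K[G]$ and an element is zero exactly when all of its coefficients vanish.

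Consequently the images of $\alpha$ and $\beta$ are nonzero in $K[G]$, and their product is the image of $\alpha\beta=0$, which is $0$. So $K[G]$ has nonzero zero divisors, as the corollary claims. All of the difficulty sits in Theorem~\ref{thm:main} and in \cite{OAI26}; the corollary itself has no real obstacle, and the only point to state explicitly is that the coefficientwise map is an injective ring homomorphism.
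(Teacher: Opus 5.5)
Your proposal is correct and follows the paper's own argument: split into $p=2$ (using \cite[Theorem~1.1]{OAI26}) and odd $p$ (using Theorem~\ref{thm:main}), then use the inclusion $\F_p[G]\subseteq K[G]$ to carry the zero divisors into $K[G]$. The paper does this in one line. Your extra check that the coefficientwise map is an injective ring homomorphism is exactly the step the paper leaves implicit.
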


\subsection{The obstruction in odd characteristic}
We recall the mechanism of \cite{OAI26}. Two finite labelled graphs $\Gamma_A,\Gamma_B$ are immersed in a rose, and a cone is attached along each of their components. In the resulting group, a path from a root $x_A$ to a vertex $x$ has a well-defined label $g_x$, and similarly a path from $x_B$ to $y$ has a label $h_y$. One sets $\alpha=\sum_x g_x$ and $\beta=\sum_y h_y^{-1}$. The \emph{step graph} has vertex set $A'\times B'$, where $A',B'$ are the components of the roots, and an edge whenever $x$ and $y$ leave along a common letter $t$. Since $(g_xt)(h_yt)^{-1}=g_xh_y^{-1}$, the product $g_xh_y^{-1}$ is constant on each component $C$ of the step graph, and
\begin{equation}\label{eq:intro-sum}
\alpha\beta=\sum_{C}\lvert C\rvert\,g_C .
\end{equation}
The degree of $(x,y)$ is the number $\lvert S_x\cap S_y\rvert$ of common outgoing letters. In \cite{OAI26} all these numbers are odd, so the handshake identity makes every $\lvert C\rvert$ even and \eqref{eq:intro-sum} vanishes over $\F_2$.

For an odd prime $p$ this argument gives nothing. The handshake identity only says that the degree sum is twice the number of edges. If all degrees are congruent to $d$ modulo $p$, this yields $d\lvert C\rvert\equiv 2\lvert E(C)\rvert$, which does not constrain $\lvert C\rvert$ modulo $p$.

\subsection{Signed cancellation}
Our remedy is to put signs on $\alpha$. Suppose the component $A'$ is bipartite with a sign $\eps\colon A'\to\{\pm1\}$ that alternates along edges, and put $\alpha=\sum_x\eps(x)g_x$. Every step edge then joins vertices of opposite sign, so in the signed degree sum $\sum_{(x,y)\in C}\eps(x)\deg(x,y)$ the two ends of each edge cancel exactly. If moreover every degree is congruent to a fixed $d\not\equiv0$ modulo $p$, the signed vertex count of each component vanishes modulo $p$, and so does $\alpha\beta$. This is Lemma~\ref{lem:signed}. For $p=2$ it reduces to the argument of \cite{OAI26}, because $-1=1$.

\subsection{The construction}
To meet the hypotheses of Lemma~\ref{lem:signed} we change three things in \cite{OAI26}.
\begin{enumerate}[label=(\roman*)]
\item The alphabet consists of the points of the projective plane $\PG(2,q)$ with $q=p^3$, together with $p$ extra letters. A vertex type is a line, possibly together with all $p$ extra letters. Since two lines meet in $1$ or $q+1$ points, every cross-intersection is $\equiv1\pmod p$ (Lemma~\ref{lem:residue}).
\item The graph $\Gamma_A$ is sampled as a random bipartite graph between two halves $A^+$ and $A^-$, which supplies the signs; $\Gamma_B$ is sampled as in \cite{OAI26} (Section~\ref{sec:random}).
\item The squared-word decay estimate of \cite[Lemma~2.1]{OAI26} is reproved for the new alphabet (Proposition~\ref{prop:decay}). Beyond its degree and frequency parameters, this is the only place where the specific design enters the probabilistic estimates, and it is where the exponent $3$ in $q=p^3$ is used.
\end{enumerate}
The geometric part of \cite{OAI26} --- the bounded-pattern estimate, the reduction of spherical arrangements to bounded patterns, and the cone-picture surgery --- then applies with one bounded extra factor in a realization count (Section~\ref{sec:transfer}).

\subsection{Dependence on \cite{OAI26}}
Our proof uses Sections~3--5 of \cite{OAI26} with the modifications just described, and we do not reproduce those sections. We have checked them, together with the lemmas of Section~2 of \cite{OAI26} on which they rely, step by step. We found no error affecting the conclusions; Appendix~\ref{app:verification} records six points that deserve a clarifying sentence. 

\subsection{Characteristic zero}
The method says nothing over $\Q$. Over $\Z$, Lemma~\ref{lem:signed} gives $d\sum_{(x,y)\in C}\eps(x)=0$ only if every degree equals $d$ exactly, not merely modulo $p$. The direct way to achieve this would be a design in which every type on one side meets every type on the other side in the same number of letters. Projective lines do not do this: their intersection matrix is $qI+J$, which has rank one only modulo $p$. A rational counterexample would in addition contradict the strong Atiyah conjecture for the group in question, since that conjecture implies the zero-divisor conjecture over $\C$ for torsion-free groups \cite{Lin93,Lue02}. We discuss this further in Section~\ref{sec:remarks}.

\subsection{Related work}
Gardam disproved Kaplansky's unit conjecture over $\F_2$ \cite{Gar21} and later over $\C$ \cite{Gar24}; a nontrivial unit is not a zero divisor. Mineyev introduced product structures giving sufficient geometric criteria for nontrivial units and zero divisors over $\F_2$ \cite{Min24}, studied computationally by Garg and Mineyev \cite{GM25}; Shin found a girth obstruction for one such route \cite{Shi26}. The probabilistic and topological methods of \cite{OAI26}, which we inherit, build on ideas from graphical small cancellation \cite{Gro03,Oll05,Oll06,Gru15,Ber19} and on planar separation \cite{LT79}.

\subsection{Conventions and organization}
We follow the conventions of \cite{OAI26}. All graphs are finite. A signed alphabet $T$ carries a fixed-point-free involution $t\mapsto\bar t$, and the rose $F$ has one vertex and one edge for each pair $\{t,\bar t\}$. A labelled graph is \emph{immersed} if at each vertex every letter labels at most one outgoing edge; $S_x$ denotes the set of outgoing letters at $x$. Lengths count edge traversals with repetition, logarithms are natural, and constants are independent of the sampling size $n$ unless stated otherwise. References of the form ``\cite[Lemma~2.2]{OAI26}'' point to the source preprint.

Section~\ref{sec:signed} proves the signed cancellation lemma. Section~\ref{sec:types} constructs the alphabet and the types, Section~\ref{sec:random} the random model, and Section~\ref{sec:decay} the decay estimate. Section~\ref{sec:transfer} explains how the arguments of \cite[Sections~3--5]{OAI26} transfer, and Section~\ref{sec:proof} proves Theorem~\ref{thm:main}. Section~\ref{sec:remarks} collects further remarks. Appendix~\ref{app:verification} records our check of \cite{OAI26}, and Appendix~\ref{app:computations} the computer verifications.

\section{Signed cancellation}\label{sec:signed}

Throughout this section $T$ is a signed alphabet and $\mathbb{F}(T)$ the free group on one letter from each pair $\{t,\bar t\}$, with $\bar t=t^{-1}$. A path in a $T$-labelled graph reads a word, hence an element of $\mathbb{F}(T)$.

\begin{definition}\label{def:setting}
Let $G$ be a group and $\phi\colon\mathbb{F}(T)\to G$ a homomorphism. Let $\Gamma_A,\Gamma_B$ be immersed $T$-labelled graphs, let $x_A,x_B$ be vertices of $\Gamma_A,\Gamma_B$, and let $A',B'$ be the vertex sets of their components. Assume that $\phi$ kills the label of every closed walk in these two components. For $x\in A'$ and $y\in B'$, let $g_x,h_y\in G$ be the images of the labels of any paths from $x_A$ to $x$ and from $x_B$ to $y$; they do not depend on the paths.

The \emph{step graph} $\Sigma$ has vertex set $A'\times B'$. A \emph{dart} at $(x,y)$ is a pair $((x,y),t)$ with $t\in S_x\cap S_y$. It leads to $(x',y')$, where $x'$ and $y'$ are the ends of the edges labelled $t$ at $x$ and $y$; the darts $((x,y),t)$ and $((x',y'),\bar t)$ form one edge of $\Sigma$.
\end{definition}

Because $A'$ and $B'$ are full components, every dart at a vertex of $A'\times B'$ leads back into $A'\times B'$, and the degree of $(x,y)$ in $\Sigma$ is exactly $\lvert S_x\cap S_y\rvert$.

\begin{lemma}[Signed cancellation]\label{lem:signed}
In the setting of Definition~\ref{def:setting}, let $p$ be a prime and suppose:
\begin{enumerate}[label=\textup{(\roman*)}]
\item the component $A'$ is bipartite, with a sign $\eps\colon A'\to\{\pm1\}$ such that $\eps(x')=-\eps(x)$ whenever $x$ and $x'$ are adjacent;
\item there is an integer $d$ with $p\nmid d$ such that $\lvert S_x\cap S_y\rvert\equiv d\pmod p$ for all $x\in A'$ and $y\in B'$.
\end{enumerate}
Put
\[
\alpha=\sum_{x\in A'}\eps(x)\,g_x,\qquad \beta=\sum_{y\in B'}h_y^{-1}.
\]
Then $\alpha\beta=0$ in $\F_p[G]$.
\end{lemma}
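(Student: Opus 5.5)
The plan is to expand the product, group its terms by step-graph component, and show that each component contributes a signed count divisible by $p$. Multiplying out gives
\[
\alpha\beta=\sum_{(x,y)\in A'\times B'}\eps(x)\,g_xh_y^{-1}.
\]
First I will check that $g_xh_y^{-1}$ is constant on each connected component $C$ of the step graph $\Sigma$. Consider a dart $((x,y),t)$ leading to $(x',y')$. By the path-independence in Definition~\ref{def:setting}, $g_{x'}=g_x\phi(t)$ and $h_{y'}=h_y\phi(t)$. Hence $g_{x'}h_{y'}^{-1}=g_xh_y^{-1}$. Writing $g_C$ for the common value on $C$, we get
\[
\alpha\beta=\sum_C\Bigl(\sum_{(x,y)\in C}\eps(x)\Bigr)g_C .
\]
The sum over components is finite, since all graphs are finite. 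It therefore suffices to show that each coefficient $\sum_{(x,y)\in C}\eps(x)$ vanishes in $\F_p$.

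For this I use a signed handshake count over the darts of $C$. Define
\[
D(C)=\sum_{(x,y)\in C}\eps(x)\deg_\Sigma(x,y).
\]
This equals the sum of $\eps(x)$ over all darts $((x,y),t)$ based in $C$. Group the darts into edges: an edge consists of the darts $((x,y),t)$ and $((x',y'),\bar t)$. Here $x'$ is the end of the $t$-edge at $x$, so $x$ and $x'$ are adjacent in $\Gamma_A$. Hypothesis~(i) then gives $\eps(x')=-\eps(x)$, so each edge contributes $0$ and $D(C)=0$ in $\Z$.

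The one point needing care is a loop in $\Gamma_A$, that is, an edge from $x$ to itself. A loop would be incompatible with the bipartite sign of (i), so none occur. Any remaining degenerate case, such as a step edge that is a loop at $(x,y)$, still consists of two darts whose first coordinates are adjacent, so the argument stands.

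Next, the remark after Definition~\ref{def:setting} gives $\deg_\Sigma(x,y)=\lvert S_x\cap S_y\rvert$, and hypothesis~(ii) makes this $\equiv d\pmod p$. Reducing modulo $p$,
\[
0=D(C)\equiv d\sum_{(x,y)\in C}\eps(x)\pmod p .
\]
Since $p\nmid d$, $d$ is invertible in $\F_p$, so the coefficient of $g_C$ vanishes in $\F_p$ and therefore $\alpha\beta=0$.

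I expect no serious obstacle. The only step requiring attention is the dart bookkeeping: confirming that every edge of $\Sigma$ pairs two darts whose $A$-coordinates are $\Gamma_A$-adjacent, including the degenerate cases above.
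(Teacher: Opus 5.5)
Your proposal is correct and follows the paper's proof essentially step by step. Both arguments show that $g_xh_y^{-1}$ is constant on step-graph components, derive the signed handshake identity $\sum_{(x,y)\in C}\eps(x)\lvert S_x\cap S_y\rvert=0$ in $\Z$ by pairing darts whose $A$-coordinates are adjacent (so bipartiteness also rules out loops), and then reduce modulo $p$ using $p\nmid d$.
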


\begin{proof}
\emph{Step 1: constancy.} If a dart at $(x,y)$ with letter $t$ leads to $(x',y')$, then $g_{x'}=g_x\phi(t)$ and $h_{y'}=h_y\phi(t)$, so
\[
g_{x'}h_{y'}^{-1}=g_x\phi(t)\phi(t)^{-1}h_y^{-1}=g_xh_y^{-1}.
\]
Hence $g_xh_y^{-1}$ is constant on each connected component $C$ of $\Sigma$; denote its value by $g_C$.

\emph{Step 2: darts pair up.} Sending a dart $((x,y),t)$ to $((x',y'),\bar t)$ is an involution on the set of darts. It has no fixed points, since $\bar t\neq t$. Because $x$ and $x'$ are adjacent in $\Gamma_A$, hypothesis (i) gives $\eps(x')=-\eps(x)$.

\emph{Step 3: signed handshake.} Fix a component $C$. Summing $\eps$ over the initial vertices of all darts in $C$, and grouping the darts first by initial vertex and then by the pairs of Step~2, we get
\[
\sum_{(x,y)\in C}\eps(x)\,\lvert S_x\cap S_y\rvert
=\sum_{\text{dart pairs in }C}\bigl(\eps(x)+\eps(x')\bigr)=0 .
\]
This is an identity in $\Z$.

\emph{Step 4: reduction modulo $p$.} By hypothesis (ii), the left-hand side is congruent modulo $p$ to $d$ times the signed count $\sum_{(x,y)\in C}\eps(x)$. Since $p\nmid d$, the signed count of every component is divisible by $p$. Therefore
\[
\alpha\beta=\sum_{(x,y)\in A'\times B'}\eps(x)\,g_xh_y^{-1}
=\sum_{C}\Bigl(\sum_{(x,y)\in C}\eps(x)\Bigr)g_C=0\quad\text{in }\F_p[G].
\]
Different components may share the same value $g_C$; this only adds zero coefficients.
\end{proof}

\begin{remark}\label{rem:signed}
\begin{enumerate}[label=(\alph*)]
\item The step graph has no loops: the two darts of an edge sit at vertices with opposite signs. In \cite{OAI26} loops were excluded by girth; here bipartiteness excludes them automatically.
\item Only $\Gamma_A$ carries signs. The graph $\Gamma_B$ need not be bipartite.
\item For $p=2$ the signs are invisible and hypothesis (ii) says that every cross-intersection is odd. Lemma~\ref{lem:signed} then reduces to the cancellation of \cite[Section~6]{OAI26}.
\item The identity of Step~3 holds over $\Z$, but Step~4 only gives divisibility by $p$. The signed counts need not vanish; see Appendix~\ref{app:computations} for an example.
\end{enumerate}
\end{remark}

Nonvanishing of $\alpha$ and $\beta$ is a separate matter. If no path from $x_A$ to a different vertex of $A'$ has trivial image in $G$, then $x_A$ is the only contributor of the identity to $\alpha$, whose identity coefficient is therefore $\eps(x_A)=\pm1$. The same applies to $\beta$, whose identity coefficient is then $1$.

\section{The alphabet and the vertex types}\label{sec:types}

Fix an odd prime $p$ and put
\begin{equation}\label{eq:params}
q=p^{3},\qquad v=q^{2}+q+1,\qquad f=\frac{q+1}{v}.
\end{equation}
The projective plane $\PG(2,q)$ has $v$ points and $v$ lines; each line has $q+1$ points, each point lies on $q+1$ lines, and two distinct lines meet in exactly one point. The exponent $3$ is needed only in Section~\ref{sec:decay}; the constructions and lemmas of this section work for any power $q$ of $p$.

\begin{construction}[Alphabet]\label{con:alphabet}
Let $P$ be the set of points of $\PG(2,q)$ (the \emph{ordinary letters}) and $E=\{e_1,\dots,e_p\}$ a set of $p$ \emph{extra letters}. Choose $p$ distinct \emph{special points} $\sigma_1,\dots,\sigma_p\in P$ and put $\bar e_i=\sigma_i$. Since $v$ and $p$ are both odd, the remaining $v-p$ points can be paired among themselves arbitrarily. This defines a fixed-point-free involution on $T=P\sqcup E$, with $\lvert T\rvert=v+p$.
\end{construction}

An odd number of extras is needed because $v$ is odd, and the extras must meet each other in a multiple of $p$ so as not to disturb the residue condition. Taking exactly $p$ extras, used all-or-nothing, achieves both and allows the same design on both sides. For $p=2$ this is impossible: an odd number of extras is forced, and an odd number is not a multiple of $2$. This is consistent with the asymmetric design of \cite{OAI26}, which uses pairs of extras on one side and triples on the other so that extra parts meet in $0$ or $2$ letters.

\begin{construction}[Types]\label{con:types}
Let $n\in2v^2\Z$. Let $A=A^+\sqcup A^-$ with $\lvert A^\pm\rvert=n/2$, and let $\lvert B\rvert=n$. To each vertex $x$ we assign a type $S_x\subseteq T$ as follows. On $B$, and separately on each of $A^+$ and $A^-$:
\begin{enumerate}[label=(\roman*)]
\item the ordinary part $S_x\cap P$ is a line, and every line occurs equally often;
\item within each line class, a proportion $f$ of the vertices have $S_x\cap E=E$, and the others have $S_x\cap E=\emptyset$.
\end{enumerate}
\end{construction}

For $Y\in\{A^+,A^-,B\}$, each line class has $\lvert Y\rvert/v$ vertices, of which $\lvert Y\rvert(q+1)/v^2$ carry the extras; both numbers are integers because $n\in 2v^2\Z$. Table~\ref{tab:counts} gives the counts for $n=2v^2$. Vertex degrees are $q+1$ or $q+1+p$. For $p=3$ we have $v=757$, $\lvert T\rvert=760$, and degrees $28$ or $31$.

\begin{table}[ht]
\centering
\begin{tabular}{@{}lccc@{}}
\toprule
Vertex set & Size & Per line class & Of which carry $E$\\
\midrule
$A^+$ (likewise $A^-$) & $v^2$ & $v$ & $q+1$\\
$B$ & $2v^2$ & $2v$ & $2(q+1)$\\
\bottomrule
\end{tabular}
\caption{Type counts at $n=2v^2$.}\label{tab:counts}
\end{table}

For a vertex set $Y$ and a letter $t$, write $Y_t=\{x\in Y: t\in S_x\}$.

\begin{lemma}[Frequencies]\label{lem:freq}
For $Y\in\{A^+,A^-,B\}$ and every $t\in T$, we have $\lvert Y_t\rvert=f\lvert Y\rvert$.
\end{lemma}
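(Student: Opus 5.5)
The plan is a direct count, splitting into the two kinds of letters: ordinary letters $t\in P$ and extra letters $t\in E$. Fix $Y\in\{A^+,A^-,B\}$. By Construction~\ref{con:types}(i), each of the $v$ lines is the ordinary part of exactly $\lvert Y\rvert/v$ vertices of $Y$. By Construction~\ref{con:types}(ii), within each line class exactly $f\lvert Y\rvert/v=\lvert Y\rvert(q+1)/v^2$ vertices carry all of $E$ and the rest carry none of it. Both numbers are integers because $n\in 2v^2\Z$, as noted after the construction.

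First take an ordinary letter $t\in P$. A vertex $x$ has $t\in S_x$ exactly when its line passes through $t$, since the extra part of $S_x$ lies in $E$ and so never contains $t$. In $\PG(2,q)$ each point lies on exactly $q+1$ lines. Hence
\[
\lvert Y_t\rvert=(q+1)\cdot\frac{\lvert Y\rvert}{v}=f\lvert Y\rvert,
\]
using $f=(q+1)/v$ from \eqref{eq:params}.

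Next take an extra letter $t=e_i\in E$. Here $t\in S_x$ exactly when $S_x\cap E=E$. Summing over the $v$ line classes gives
\[
\lvert Y_t\rvert=v\cdot\frac{f\lvert Y\rvert}{v}=f\lvert Y\rvert.
\]
This holds for every $i$ at once, because the extras are used all-or-nothing.

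There is no real obstacle. The only point needing care is that the two cases do not interact: ordinary letters never appear in the extra part and extra letters never appear in the line part, because $P$ and $E$ are disjoint in $T=P\sqcup E$. The involution $t\mapsto\bar t$ plays no role, since types are defined as subsets of $T$ directly. The count uses only the incidence fact that each point lies on $q+1$ lines, so it holds for any prime power $q$, consistent with the remark after \eqref{eq:params}.
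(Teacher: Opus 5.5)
Your proposal is correct and follows essentially the same argument as the paper's proof. For an ordinary letter you count the $q+1$ line classes through the point, and for an extra letter you use the fact that a proportion $f$ of each line class carries all of $E$.
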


\begin{proof}
A point lies on $q+1$ of the $v$ lines, and the extras are assigned independently of the line, so an ordinary letter lies in a proportion $(q+1)/v=f$ of the types. An extra letter lies exactly in the types carrying $E$, which form a proportion $f$ of every line class.
\end{proof}

\begin{lemma}[Residue]\label{lem:residue}
For all $x\in A$ and $y\in B$, we have $\lvert S_x\cap S_y\rvert\equiv1\pmod p$.
\end{lemma}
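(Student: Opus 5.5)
The plan is to split each type $S_x$ into its ordinary part $S_x\cap P$ and its extra part $S_x\cap E$. Since $P$ and $E$ are disjoint subsets of $T$, this gives
\[
\lvert S_x\cap S_y\rvert=\lvert (S_x\cap P)\cap(S_y\cap P)\rvert+\lvert (S_x\cap E)\cap(S_y\cap E)\rvert .
\]
I will then show that the first summand is $\equiv1$ and the second is $\equiv0$ modulo $p$.

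For the ordinary part, Construction~\ref{con:types}(i) says that $S_x\cap P$ and $S_y\cap P$ are lines $\ell_x,\ell_y$ of $\PG(2,q)$. There are two cases. If $\ell_x\neq\ell_y$, two distinct lines meet in exactly one point, so the intersection has size $1$. If $\ell_x=\ell_y$, the intersection is the whole line, of size $q+1=p^3+1\equiv1\pmod p$. Either way the ordinary contribution is $\equiv1\pmod p$.

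For the extra part, Construction~\ref{con:types}(ii) says that each of $S_x\cap E$ and $S_y\cap E$ is either $\emptyset$ or all of $E$. So their intersection is either $\emptyset$ or $E$, of size $0$ or $\lvert E\rvert=p$, and both are $\equiv0\pmod p$. This is exactly why the extras are used all-or-nothing and why there are precisely $p$ of them.

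Adding the two contributions gives $\lvert S_x\cap S_y\rvert\equiv1\pmod p$. There is no real obstacle. The only point to watch is that ordinary and extra letters never interact: the involution pairs $e_i$ with $\sigma_i$, but intersections of types are taken as subsets of $T$, so the involution plays no role here. The argument never uses that $x\in A$ and $y\in B$ specifically; it holds for any two vertices.
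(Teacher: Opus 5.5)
Your proposal is correct and follows the paper's proof: split each type into its ordinary part (two lines meeting in $1$ or $q+1\equiv1\pmod p$ points) and its extra part (meeting in $0$ or $p$ letters). Your added remarks are also accurate. The involution plays no role here, and the argument holds for any two vertices, since all three vertex sets use the same type design.
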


\begin{proof}
The ordinary parts are two lines, which meet in $1$ point if they are distinct and in $q+1$ points if they are equal. Both numbers are $\equiv1\pmod p$ because $p\mid q$. The extra parts meet in $p$ letters if both types carry $E$ and in no letter otherwise; both numbers are $\equiv0\pmod p$.
\end{proof}

Lemma~\ref{lem:residue} holds for every choice of matchings. As in \cite{OAI26}, the residue condition is built into the types, and the random choices below only have to control the geometry. In matrix language, the line--line intersection matrix of $\PG(2,q)$ is $qI+J$, which has rank one modulo $p$ but full rank over $\Q$; this is the structural reason the construction lives in characteristic $p$.

\section{The random model}\label{sec:random}

\begin{construction}[Matchings]\label{con:matchings}
For each inverse pair $\{t,\bar t\}$ choose, independently of all other choices:
\begin{enumerate}[label=(\roman*)]
\item a uniform bijection $B_t\to B_{\bar t}$;
\item two uniform bijections $A^+_t\to A^-_{\bar t}$ and $A^-_t\to A^+_{\bar t}$.
\end{enumerate}
A matched pair $(x,y)$ becomes an edge labelled $t$ at $x$ and $\bar t$ at $y$. Let $\Gamma_A$ and $\Gamma_B$ be the resulting graphs on $A$ and $B$, and $\Gamma=\Gamma_A\sqcup\Gamma_B$.
\end{construction}

By Lemma~\ref{lem:freq} all these bijections exist. We call each of them a \emph{sub-matching}; it has $N_B=nf$ pairs on $B$ and $N_A=nf/2$ pairs on $A$. Every vertex receives exactly one outgoing edge for each letter of its type, so $\Gamma$ immerses into the rose $F$ on $T$. Every edge of $\Gamma_A$ joins $A^+$ to $A^-$, so $\Gamma_A$ is bipartite and
\[
\eps(x)=\pm1\quad\text{for }x\in A^{\pm}
\]
satisfies hypothesis (i) of Lemma~\ref{lem:signed}. The maximum degree is $d_*=q+1+p$.

Fix a constant $c>0$ with
\begin{equation}\label{eq:c}
c\log\Bigl(\frac{4\lvert T\rvert}{f}\Bigr)<1,\qquad 2c\log d_*<1,
\end{equation}
and put $L=\lfloor c\log n\rfloor$. From now on all probabilities refer to the uniform distribution on tuples of matchings with $\girth(\Gamma)\ge L$, and $n$ runs through $2v^2\Z$. The factor $4$ in \eqref{eq:c} replaces the factor $2$ of \cite[(2.5)]{OAI26} and accounts for the halved sub-matchings on $A$.

\begin{lemma}[Switching]\label{lem:switch}
For all sufficiently large $n$, tuples with $\girth(\Gamma)\ge L$ exist. Condition on $\girth(\Gamma)\ge L$ and on any feasible set of $s$ prescribed pairs, and let $xy$ be a further pair of a sub-matching of size $N$, not prescribed in either direction. Then
\[
\Pr(xy\text{ present})\le\frac{1}{N-s-r_n},\qquad r_n=O\bigl(d_*^{\,2L+2}\bigr)=o(n),
\]
whenever the denominator is positive, with $r_n$ independent of the prescriptions. A pair joining two vertices of the same sign has probability $0$.
\end{lemma}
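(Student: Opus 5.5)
The plan is to follow the switching argument of \cite[Lemma~2.2]{OAI26}, treating the two sub-matchings on $A$ like any other sub-matching. The only new features are the halved sizes $N_A=nf/2$ and the sign restriction.

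\emph{Existence.} First I would show that uniform matchings give $\girth(\Gamma)\ge L$ with probability bounded below, in fact tending to $1$ in the regime \eqref{eq:c}. The expected number of closed non-backtracking cycles of length $k<L$ in $\Gamma$ is at most a sum over reduced cyclic words of length $k$ and starting vertices. Each word has at most $\lvert T\rvert^k$ choices and each start at most $n$. Each of the $k$ prescribed pairs has probability roughly $1/(N-k)$, where $N\ge nf/2$. This gives a bound of order
\[
\sum_{k<L}n\Bigl(\frac{2\lvert T\rvert}{nf}\Bigr)^{k}\cdot(\text{path choices}).
\]
I would bound the number of closed walks more crudely by $n\,d_*^{k}$ and the closing probability by $(4/(nf))$ up to constants. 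The first condition in \eqref{eq:c} then makes the total $O(n^{c\log(4\lvert T\rvert/f)})$ times $n^{-1}$ for the closing edge, which is $o(1)$. So good tuples exist for large $n$.

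\emph{Switching bound.} Condition on the girth event and on the $s$ prescribed pairs. Fix the sub-matching $M$ of size $N$ containing the candidate pair $xy$, and compare the set $\mathcal{X}$ of configurations containing $xy$ with the set $\mathcal{Y}$ of those not containing it. For $\omega\in\mathcal{X}$, pick another pair $x'y'$ of $M$ that is not prescribed, which leaves at least $N-s-1$ choices. Switch to $xy',x'y$. This stays inside the same sub-matching; on $A$ it preserves the bipartition, since $x,x'$ lie in the same side $A^\pm_t$ and $y,y'$ in the opposite side $A^\mp_{\bar t}$. The switch can create a cycle shorter than $L$ only if $x'$ or $y'$ lies within distance $L$ of $x$ or $y$ in $\omega$. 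At most $r_n=O(d_*^{L+1})$ such choices exist, and I would take $r_n=O(d_*^{2L+2})$ for slack, so this is independent of the prescriptions. Each forward switch thus has at least $N-s-r_n$ valid images. Each $\omega'\in\mathcal{Y}$ arises from at most one preimage, because the pairs at $x$ and $y$ in $\omega'$ determine $x',y'$. Double counting gives $\lvert\mathcal{X}\rvert(N-s-r_n)\le\lvert\mathcal{Y}\rvert\le$ total, hence the bound. The second condition in \eqref{eq:c} gives $d_*^{2L+2}=O(n^{2c\log d_*})=o(n)$.

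\emph{Sign claim and obstacle.} The statement about same-sign pairs is immediate: Construction~\ref{con:matchings} only matches $A^+_t$ with $A^-_{\bar t}$, so such a pair lies in no sub-matching and has probability $0$. The main obstacle is bookkeeping in the existence step. I need the cycle count to respect the bipartite constraint, and the factor $4$ in \eqref{eq:c} must absorb the halved $N_A$ together with the choice of which of the two $A$ sub-matchings is used. I would first check that the closing-edge probability is at most $2/(N_A-L)\le 4/(nf)$ uniformly, and then carry out the sum.
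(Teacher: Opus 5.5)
Your conditional switching bound is essentially the paper's argument, with your $x'y'$ playing the role of the paper's $uz$, and your sign claim is fine. The gap is in the existence step.

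\textbf{The first-moment count does not give $o(1)$.} A potential $k$-cycle has at most $2n^k\lvert T\rvert^k$ descriptions by vertices and letters. Its $k$ edges are present with probability at most $(nf/4)^{-k}$. So the first-moment bound for $k$-cycles is $2(4\lvert T\rvert/f)^k$, which does not decay in $n$. The true expectation does not decay either: already the expected number of triangles stays bounded away from $0$. In your walk formulation, the factor $n$ for the starting vertex is exactly cancelled by the $1/n$ closing probability. The extra factor $n^{-1}$ you attach counts the closing edge twice. Summing over $k<L$ gives $O(n^{c\log(4\lvert T\rvert/f)})$. This is $o(n)$ by \eqref{eq:c}, but it tends to infinity.

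Your assertion that $\girth(\Gamma)\ge L$ holds with probability tending to $1$ is therefore false, not merely unproved. The expected number of cycles shorter than $L$ is polynomially large, so the girth event is rare. The first moment alone cannot produce such a tuple.

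\textbf{What the first moment does give, and the missing step.} It gives a tuple with $o(n)$ \emph{bad} edges, i.e.\ edges on cycles of length $<L$. The paper bounds their expected number by $\sum_{k<L}2k(4\lvert T\rvert/f)^k=O(L^2n^{c\log(4\lvert T\rvert/f)})=o(n)$. You then need a deterministic clean-up.
\begin{enumerate}
\item Take a bad edge $xy$ with label $t$ in a sub-matching $M$. Pick an edge $uz$ of $M$, also oriented with label $t$, that is \emph{not bad} and has both ends at distance more than $2L$ from $\{x,y\}$. This is possible because only $o(n)$ edges are bad or near $\{x,y\}$, while $\lvert M\rvert\ge nf/2$.
\item Replace $xy,uz$ by $xz,uy$.
\item Rule out short cycles through the new edges. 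The distance condition excludes a short cycle through only one new edge. It also excludes a cycle through both whose arcs pair $x$ with $u$ and $z$ with $y$. In the remaining case, the arc from $z$ to $u$ plus $uz$ would be a short cycle, which is excluded because $uz$ is good.
\end{enumerate}
Here you cannot appeal to the girth of the current tuple, as you implicitly do in your forward switch. The arc from $x$ to $y$ also gives no contradiction, since $xy$ is bad.

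Hence no cycle of length $<L$ passes through a new edge, old good edges stay good, and the number of bad edges strictly decreases. Iterating yields a tuple of girth at least $L$. On $A$ this switch stays inside $A^+_t\times A^-_{\bar t}$ or $A^-_t\times A^+_{\bar t}$, exactly as your forward switch does.
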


\begin{proof}
We follow \cite[Lemma~2.2]{OAI26}, working inside a single sub-matching throughout. Call an edge \emph{bad} if it lies on a cycle of length less than $L$.

\emph{Existence.} Before conditioning, a cycle of length $k$ has at most $2n^k\lvert T\rvert^k$ descriptions by vertices and letters. Its $k$ edges are distinct matching pairs, all present with probability at most $(nf/2-k)^{-k}$. For large $n$ the expected number of bad edges is therefore at most
\[
\sum_{k<L}2k\,n^k\lvert T\rvert^k\Bigl(\frac{nf}{4}\Bigr)^{-k}=2\sum_{k<L}k\Bigl(\frac{4\lvert T\rvert}{f}\Bigr)^k=O\bigl(L^2n^{c\log(4\lvert T\rvert/f)}\bigr)=o(n)
\]
by \eqref{eq:c}. Fix a tuple with $o(n)$ bad edges, let $xy$ be bad, oriented with label $t$ in a sub-matching $M$, and choose an edge $uz$ of $M$, also oriented with label $t$, which is not bad and has both ends at distance more than $2L$ from $\{x,y\}$. Such an edge exists because bad edges number $o(n)$ and the $2L$-neighbourhood of $\{x,y\}$ meets $O(d_*^{2L+2})=o(n)$ edges. Replace $xy,uz$ by $xz,uy$. Both new pairs lie in $M$, so the result is again a tuple of matchings, and on $A$ both new edges still join $A^+$ to $A^-$.

Let $Z$ be a cycle of length less than $L$ through a new edge. If $Z$ uses only $xz$, then $Z-xz$ is an old path from $x$ to $z$ of length less than $L$, contradicting the choice of $uz$; the same applies to $uy$. If $Z$ uses both new edges, the two arcs of $Z-\{xz,uy\}$ are old paths pairing $\{x,z\}$ with $\{u,y\}$, either as $(x,u),(z,y)$ or as $(x,y),(z,u)$. The first contradicts the distance condition. In the second, the arc from $z$ to $u$ together with the old edge $uz$ is a cycle of length less than $L$, contradicting the choice of $uz$. Hence no new edge is bad; $xy$ was bad and $uz$ was not, so the number of bad edges decreases. Repeating gives a tuple of girth at least $L$.

\emph{Conditional bound.} Let $\Omega_s$ be the set of girth-$\ge L$ tuples containing the $s$ prescribed pairs, and $\Omega_{s,xy}\subseteq\Omega_s$ those that also contain $xy$, oriented with label $t$. If $xy$ is incompatible with the types or the signs, $\Omega_{s,xy}=\emptyset$. Otherwise, for $\omega\in\Omega_{s,xy}$ let $U(\omega)$ be the set of edges $uz$ of the same sub-matching, oriented with label $t$, not prescribed, and with both ends at distance more than $2L$ from $\{x,y\}$. Then $\lvert U(\omega)\rvert\ge N-s-r_n$, where $r_n$ bounds the number of edges meeting the $2L$-neighbourhood of $\{x,y\}$ plus one; this bound depends only on $d_*$ and $L$. Every edge of $\omega$ is good, so the switching argument above applies and produces $\omega'\in\Omega_s$ with the prescriptions untouched. The map $(\omega,uz)\mapsto\omega'$ is injective: in $\omega'$ the $t$-neighbour of $x$ is $z$ and the $\bar t$-neighbour of $y$ is $u$, which recovers $uz$ and then $\omega$. Hence $\lvert\Omega_{s,xy}\rvert(N-s-r_n)\le\lvert\Omega_s\rvert$. Finally $r_n=O(d_*^{2L+2})=o(n)$ by \eqref{eq:c}.
\end{proof}

\begin{corollary}\label{cor:prescriptions}
Any feasible collection of $E=O(L)$ distinct prescriptions, $E_A$ on $A$ and $E_B$ on $B$, has conditional probability at most
\[
\Bigl(\frac{nf}{2}\Bigr)^{-E_A}(nf)^{-E_B}\,e^{o(L)},
\]
uniformly over such collections.
\end{corollary}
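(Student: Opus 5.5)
We prove Corollary~\ref{cor:prescriptions} by the chain rule for conditional probabilities, applying Lemma~\ref{lem:switch} once for each prescription. List the prescriptions in any order as $\pi_1,\dots,\pi_E$. Then
\[
\Pr(\pi_1,\dots,\pi_E)=\prod_{j=1}^{E}\Pr\bigl(\pi_j\,\big|\,\girth(\Gamma)\ge L,\ \pi_1,\dots,\pi_{j-1}\bigr).
\]
If some initial segment is infeasible, the probability is $0$ and there is nothing to prove. Otherwise each factor is the probability of a further pair, conditioned on a feasible set of $s=j-1<E$ prescribed pairs.

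\emph{Applying the switching bound.} Since the prescriptions are distinct, $\pi_j$ is not among the earlier ones. The phrase ``not prescribed in either direction'' is satisfied: a pair $xy$ with label $t$ and the same pair read as $yx$ with label $\bar t$ are one edge, so we treat distinct prescriptions as distinct edges. Lemma~\ref{lem:switch} then bounds the factor by $(N-s-r_n)^{-1}$, where $N=nf/2$ if $\pi_j$ lies on $A$ and $N=nf$ if it lies on $B$. The denominator is positive for large $n$, because $s<E=O(L)=O(\log n)$ and $r_n=o(n)$. Moreover $r_n$ does not depend on the prescriptions, so the bound is uniform.

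\emph{Collecting the error.} For each factor,
\[
\frac{1}{N-s-r_n}=\frac1N\Bigl(1-\frac{s+r_n}{N}\Bigr)^{-1}\le\frac1N\exp\Bigl(\frac{2(s+r_n)}{N}\Bigr)
\]
once $(s+r_n)/N\le 1/2$. Taking the product over the $E$ factors gives
\[
\Bigl(\frac{nf}{2}\Bigr)^{-E_A}(nf)^{-E_B}\exp\Bigl(\frac{2E(E+r_n)}{nf/2}\Bigr).
\]

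\emph{The main point: the exponent is $o(L)$.} First, $E^2/n=O(L^2/n)=o(1)$. Second, $Er_n/n=O\bigl(L\,d_*^{2L+2}/n\bigr)$. Here $d_*^{2L}\le n^{2c\log d_*}$, and $2c\log d_*<1$ by \eqref{eq:c}, so $d_*^{2L+2}/n=O(n^{-\delta})$ for some $\delta>0$ and $Er_n/n=O(Ln^{-\delta})=o(1)$. The whole exponent is therefore $o(1)$, which is certainly $e^{o(L)}$.

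All constants depend only on $p$, $c$ and the implied constant in $E=O(L)$, so the bound holds uniformly over all such collections.
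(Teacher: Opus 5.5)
Your proposal is correct and follows the paper's proof: you apply Lemma~\ref{lem:switch} successively through the chain rule and bound the product of the correction factors $(1-(s+r_n)/N)^{-1}$ by $\exp(O(E(E+r_n)/n))$. Your extra use of the polynomial rate $r_n=O(n^{2c\log d_*})$ shows the error factor is actually $e^{o(1)}$, although $r_n=o(n)$ alone already gives the stated $e^{o(L)}$.
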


\begin{proof}
Apply Lemma~\ref{lem:switch} successively. The product of the factors $(1-(k+r_n)/N)^{-1}$ for $k<E$ is $\exp(O(E(E+r_n)/n))=e^{o(L)}$.
\end{proof}

\begin{lemma}[Diameter]\label{lem:diameter}
There is a constant $D_0$, depending only on $p$, such that with probability tending to one every component of $\Gamma$ has diameter at most $D_0L$.
\end{lemma}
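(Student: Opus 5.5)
We prove the diameter bound through an expansion estimate for small vertex sets, in the style of the corresponding lemma of \cite{OAI26}. Fix one of the graphs $\Gamma_A$, $\Gamma_B$; the two cases differ only in the sub-matching sizes $N_A=nf/2$ and $N_B=nf$. Every vertex has degree at least $q+1\ge 3$. Together with $\girth(\Gamma)\ge L$, this implies that the ball of radius $r<L/2$ around any vertex is a tree containing at least $2^{r}$ vertices. So balls of radius $\lfloor L/3\rfloor$ already contain $n^{c'}$ vertices for some $c'>0$ depending only on $c$. The remaining task is to grow balls from size $n^{c'}$ to size a constant fraction of $n$ within $O(L)$ further steps. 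We do this by showing that, with high probability, every connected vertex set $W$ with $n^{c'}\le\lvert W\rvert\le \delta n$ has many edges leaving it. Then every ball of radius $D_1L$ has more than half of the vertices of its component. Any two balls of radius $D_1L$ in the same component therefore intersect, and the diameter is at most $2D_1L$; we set $D_0=2D_1$.

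For the expansion estimate we use a first-moment count in the configuration model, corrected by the girth conditioning. A connected set $W$ of size $k$ is described by a spanning tree. Instead of enumerating these trees, which could lose a factor $(d_*)^k$ that is too large, we enumerate the vertex set $W$, at most $\binom{n}{k}$ choices. We then bound the probability that at most $\eta k$ of the $\sum_{x\in W}\lvert S_x\rvert$ half-edges at $W$ leave $W$. This event forces at least roughly $(q+1)k/2-\eta k$ matching pairs with both ends inside $W$. For each letter, the probability that a uniform bijection between sets of size $N$ maps many prescribed points into a set of size $\le k$ is at most $(k/N)^{\text{count}}$ times a binomial factor. Since $q+1\ge 4$, the total exponent exceeds $k$ by a margin, and summing over $k$ in $[n^{c'},\delta n]$ gives $o(1)$ once $\delta$ is small in terms of $\lvert T\rvert$ and $f$. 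On $A$ the bipartite split only changes constants, because each sub-matching joins $A^\pm_t$ to $A^\mp_{\bar t}$ and each has size $nf/2$.

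The girth conditioning is handled by comparison rather than by Corollary~\ref{cor:prescriptions}, which only applies to $O(L)$ prescriptions, whereas here there are linearly many. By Lemma~\ref{lem:switch}, the unconditioned model has $o(n)$ bad edges in expectation. We need the conditioned probability of girth at least $L$ to be not too small, at least $e^{-o(n^{c'})}$, which is the standard estimate in \cite[Lemma~2.2]{OAI26}. This factor is absorbed by the superexponential decay of the expansion failure probability for $k\ge n^{c'}$. If this comparison fails, the fallback is to run the switching argument of Lemma~\ref{lem:switch} once per edge and accept a product of factors $(1+o(1))$.

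The step we expect to be the main obstacle is exactly this control of the girth conditioning for events involving linearly many pairs. The expansion count itself is routine. Once it is done, the final step is to give the explicit chain of radii: $L/3$ steps to reach $n^{c'}$, followed by $O(\log n/\log(1+\eta))=O(L)$ steps of growth by a factor $1+\eta$. The resulting constant depends only on $c$, $\lvert T\rvert$ and $f$, hence only on $p$.
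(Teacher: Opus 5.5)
The gap is in your last step. Your expansion count is essentially the paper's: it counts sets that fail to expand, inside sets spanning at least $(q+1)k/2$ edges. But it only covers $\lvert W\rvert\le\delta n$ with $\delta$ small in terms of $f$.
\begin{itemize}
\item Once $k\ge nf$ the factor $k/N$ exceeds $1$, and the bound is vacuous.
\item In the switching version, the factors $1/(N-s-r_n)$ degenerate as $s$ approaches $N$.
\end{itemize}
So you only learn that balls of radius $O(L)$ have more than $\delta n$ vertices. Two balls then need not intersect, because components have order $n$ vertices. Hence your claim that every ball of radius $D_1L$ contains more than half of its component does not follow.

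What is needed is a final step that works for every fixed $\delta>0$. This is what the paper invokes when it says that the last step of \cite[Lemma~2.3]{OAI26} uses only balls of more than $\rho n$ vertices. A packing argument suffices. Expansion on $[n^{c'},\delta n]$ rules out components with at most $\delta n$ vertices, so every ball of radius $R=O(L)$ has more than $\delta n$ vertices. Take two vertices at distance $\ell$ and a geodesic between them. Its points spaced $2R+1$ apart have pairwise disjoint $R$-balls, so $(\lfloor\ell/(2R+1)\rfloor+1)\,\delta n<n$. Hence $\ell<(2R+1)/\delta=O(L)$.

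Your primary treatment of the girth conditioning also does not work and should be dropped. Lemma~\ref{lem:switch}, like \cite[Lemma~2.2]{OAI26}, gives existence and conditional upper bounds. It gives no lower bound on $\Pr(\girth\ge L)$ in the unconditioned model, and the bound $e^{-o(n^{c'})}$ should not be expected:
\begin{itemize}
\item The expected number of cycles of length less than $L$ is of order $q^{L}/L=n^{c\log q}/L$.
\item Heuristically, the probability of having none is therefore about $e^{-n^{c\log q}/L}$.
\item Tree balls of radius below $L/2$ only guarantee $c'$ up to about $(c/2)\log q$.
\end{itemize}
So the expansion failure bound $e^{-\Omega(n^{c'}\log n)}$ at $k\approx n^{c'}$ cannot absorb $1/\Pr(\girth\ge L)$.

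Your fallback is the correct argument, and it is the one the paper uses: prescribe the internal edges one at a time via Lemma~\ref{lem:switch}. The loss per edge is then a constant, not $1+o(1)$, because the number $s$ of earlier prescriptions is linear in $k$. The paper takes $\rho\le 1/(4v)$ so that $s+r_n\le N/2$; every factor is then at most $2/N\le 4/(nf)$, and this constant is absorbed by the $(k/n)^{\Omega(k)}$ decay. With this route the restriction $k\ge n^{c'}$ and the tree-ball stage are unnecessary. The same count works for all $1\le k\le\rho n$, which is how the paper obtains doubling of every small set.
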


\begin{proof}
We follow the proof of \cite[Lemma~2.3]{OAI26}, which bounds the probability directly by the switching lemma and allows the constant $\rho$ to be decreased as needed. We decrease $\rho$ so that, in addition, $\rho\le1/(4v)$; the final step of that proof, which bounds the diameter using balls of more than $\rho n$ vertices, works for every fixed $\rho>0$.

A $k$-set $S$, $1\le k\le\rho n$, that fails to double on adjoining its neighbours is contained in a set $W$ of $2k$ vertices containing all edges at $S$. Since every vertex has degree at least $q+1$, the set $W$ spans at least $r=\lceil(q+1)k/2\rceil$ distinct edges; fix $r$ of them.

The number $r$ grows linearly in $k$ and may be of order $n$, so Corollary~\ref{cor:prescriptions}, which concerns $O(L)$ prescriptions, does not apply. Instead we use Lemma~\ref{lem:switch} directly, prescribing the $r$ edges one at a time. When the $(s+1)$-st edge is added, at most
\[
s\le r-1\le\frac{(q+1)\rho n}{2}\le\frac{(q+1)n}{8v}=\frac{nf}{8}
\]
edges are prescribed, and the relevant sub-matching has size $N\ge N_A=nf/2$. Since $r_n=o(n)$, for large $n$ we have $s+r_n\le nf/4\le N/2$, so each factor supplied by Lemma~\ref{lem:switch} is at most $2/N\le4/(nf)$. Hence any fixed collection of $r$ distinct edges, with their letters, is present with conditional probability at most $(4/(nf))^r$ (and with probability $0$ if it is not feasible).

The set $W$ can be chosen in at most $\binom{2n}{2k}\le(en/k)^{2k}$ ways. An edge inside $W$ is determined by one of its ends, the letter at that end (at most $d_*$ choices) and its other end, so $W$ spans at most $4k^2d_*$ candidate edges, and $r$ of them can be chosen in at most $(4ek^2d_*/r)^r\le(16ek)^r$ ways, because $r\ge(q+1)k/2$ and $d_*\le2(q+1)$. Since $r\le(q+1)k/2+1\le(q+2)k$, the probability that some $k$-set fails to double is therefore at most
\[
\Bigl(\frac{en}{k}\Bigr)^{2k}(16ek)^r\Bigl(\frac{4}{nf}\Bigr)^r
=e^{2k}\Bigl(\frac{64e}{f}\Bigr)^r\Bigl(\frac{k}{n}\Bigr)^{r-2k}
\le\Bigl(C_2\Bigl(\frac kn\Bigr)^{r/k-2}\Bigr)^k,\qquad C_2=e^2\Bigl(\frac{64e}{f}\Bigr)^{q+2},
\]
and $r/k-2\ge(q+1)/2-2\ge12$. The constant $C_2$ depends only on $p$, not on $\rho$. Any positive exponent suffices for the summation over $k$; the rest of the argument is unchanged.
\end{proof}

\section{Turn weights and squared-word decay}\label{sec:decay}

This section replaces \cite[Lemma~2.1]{OAI26}. Apart from the degree and frequency parameters $d_*$, $f$ and the minimum turn weight, it is the only point where the specific type design enters the probabilistic estimates.

For letters $t,u$ with $u\neq\bar t$, let $w(t,u)$ be the proportion of vertices of $B$ whose type contains both $\bar t$ and $u$, among those whose type contains $\bar t$; put $w(t,\bar t)=0$. Since the type distributions on $A^+$, $A^-$ and $B$ coincide, the same proportions arise on $A^+$ and on $A^-$. For a word $W=t_1\cdots t_h$ put
\[
P(W)=\prod_{i=1}^{h-1}w(t_i,t_{i+1}).
\]

\begin{lemma}[Turn weights]\label{lem:weights}
For $u\neq\bar t$,
\[
w(t,u)=\begin{cases}1/(q+1)&\text{if }\bar t,u\in P,\\ f&\text{if exactly one of }\bar t,u\text{ lies in }E,\\ 1&\text{if }\bar t,u\in E.\end{cases}
\]
Moreover $w(t,u)=w(\bar u,\bar t)$, and hence $P(W)=P(W^{-1})$.
\end{lemma}

\begin{proof}
Two distinct points lie on exactly one common line, so exactly one of the $q+1$ lines through $\bar t$ contains $u$. The extras are assigned independently of the line, and a type containing one extra contains all of them. For the symmetry, $w(t,u)=\lvert B_{\bar t}\cap B_u\rvert/(f\lvert B\rvert)$ by Lemma~\ref{lem:freq}, and this expression is symmetric under $(t,u)\mapsto(\bar u,\bar t)$.
\end{proof}

In particular every non-cancelling turn has positive weight, and the smallest positive weight is $1/(q+1)$, which is slightly smaller than $f$. As in \cite{OAI26},
\begin{equation}\label{eq:transfer}
\sum_{W\in T^h}P(W)^2=\mathbf 1^{\top}M^{h-1}\mathbf 1,\qquad M_{t,u}=w(t,u)^2 .
\end{equation}

\subsection{Lumping}
Partition $T$ into three classes according to $\bar t$:
\[
\mathcal E=\{t:\bar t\in E\},\qquad \mathcal S=\{t:\bar t\text{ is special}\},\qquad \mathcal O=T\setminus(\mathcal E\cup\mathcal S),
\]
of sizes $p$, $p$ and $v-p$. The next state after $t$ is the next letter $u$, whose class is read off from $\bar u$: an extra $u$ lies in $\mathcal S$, a special point $u$ lies in $\mathcal E$, and any other point $u$ lies in $\mathcal O$. Put
\[
a=\frac{1}{(q+1)^2},\qquad b=f^2 .
\]

\begin{lemma}[Lumping]\label{lem:lumping}
For $t\in T$ and a class $\mathcal K\in\{\mathcal E,\mathcal S,\mathcal O\}$, the sum $\sum_{u\in\mathcal K}M_{t,u}$ depends only on the class of $t$, and is the corresponding entry of
\[
\Mbar=\begin{pmatrix} bp & p-1 & b(v-p)\\ a(p-1) & bp & a(v-p)\\ ap & bp & a(v-p-1)\end{pmatrix},
\]
with rows and columns ordered $\mathcal E,\mathcal S,\mathcal O$.
\end{lemma}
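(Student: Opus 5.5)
The plan is a direct case computation. Lemma~\ref{lem:weights} shows that $M_{t,u}=w(t,u)^2$ depends only on three things: whether $\bar t\in P$ or $\bar t\in E$, whether $u\in P$ or $u\in E$, and whether $u=\bar t$ (in which case the weight is $0$). So I will fix the class of $t$, and for each class $\mathcal K$ describe the set of letters $u\in\mathcal K$ in terms of $u$ itself, not $\bar u$. By Construction~\ref{con:alphabet}:
\begin{itemize}
\item $u\in\mathcal E$ exactly when $u$ is one of the $p$ special points;
\item $u\in\mathcal S$ exactly when $u$ is one of the $p$ extras;
\item $u\in\mathcal O$ exactly when $u$ is one of the $v-p$ non-special points.
\end{itemize}
Each sum $\sum_{u\in\mathcal K}M_{t,u}$ is then a constant weight times the size of $\mathcal K$, minus one term if $\bar t\in\mathcal K$ (as a letter, that is, if the forbidden $u=\bar t$ lies in $\mathcal K$).

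\emph{Row $\mathcal E$} ($\bar t$ an extra). The letters $u\in\mathcal E$ are special points, so exactly one of $\bar t,u$ lies in $E$. Each of these $p$ terms contributes $f^2=b$, giving $bp$. The letters $u\in\mathcal S$ are extras, with weight $1$. The forbidden letter $u=\bar t$ is itself an extra, so the sum is $p-1$. The letters $u\in\mathcal O$ are points, giving $b(v-p)$.

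\emph{Row $\mathcal S$} ($\bar t$ a special point). The letters $u\in\mathcal E$ are special points, with weight $a$. Here $\bar t$ is one of them and must be excluded, giving $a(p-1)$. The extras give $bp$. The non-special points give $a(v-p)$, with no exclusion needed since $\bar t$ is special.

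\emph{Row $\mathcal O$} ($\bar t$ a non-special point). The special points give $ap$, and the extras give $bp$. The non-special points give $a(v-p-1)$, because $\bar t$ is one of them and is excluded.

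In each case the result depends only on the class of $t$, not on the particular letter. This gives both the independence claim and the displayed matrix $\Mbar$.

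The only step needing care, and the main potential pitfall, is the bookkeeping of which class contains the excluded letter $u=\bar t$. In the first row the exclusion falls in $\mathcal S$; in the other two rows it falls in the class of points that $\bar t$ belongs to. This is where the asymmetric entries $p-1$, $a(p-1)$ and $a(v-p-1)$ arise.
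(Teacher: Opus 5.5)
Your proof is correct and follows the same approach as the paper's. The paper also counts successors class by class using Lemma~\ref{lem:weights}: it reads off $\mathcal E$, $\mathcal S$, $\mathcal O$ as the special points, the extras and the non-special points, and it excludes the cancelling successor $u=\bar t$ in the same three places you do.
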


\begin{proof}
Count successors using Lemma~\ref{lem:weights}. If $t\in\mathcal E$, then $\bar t$ is an extra. The other $p-1$ extras have weight $1$ and lie in $\mathcal S$; the $p$ special points have weight $f$ and lie in $\mathcal E$; the other $v-p$ points have weight $f$ and lie in $\mathcal O$. If $t\in\mathcal S$, then $\bar t$ is a special point: the $p$ extras have weight $f$, the other $p-1$ special points weight $1/(q+1)$, and the $v-p$ remaining points weight $1/(q+1)$. If $t\in\mathcal O$: the $p$ extras have weight $f$, the $p$ special points weight $1/(q+1)$, and the other $v-p-1$ non-special points weight $1/(q+1)$.
\end{proof}

The entry $p-1$ in the first row is the main danger of the design: from an extra, the walk may continue along any of the other extras with weight one.

\subsection{The decay estimate}

\begin{proposition}[Squared-word decay]\label{prop:decay}
Let $Q=q+1$ with $q=p^3$. For every $h\ge1$,
\[
\sum_{W\in T^h}P(W)^2\le(v+p^2+p)\Bigl(1-\frac{1}{4Q}\Bigr)^{h-1}.
\]
Consequently $\sum_{W\in T^h}P(W)^2\le e^{-2\delta h}$ with $\delta=1/(16Q)$ for all $h\ge 8Q\log(v+p^2+p)+2$.
\end{proposition}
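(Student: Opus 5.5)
The plan is to collapse the $\lvert T\rvert\times\lvert T\rvert$ transfer matrix in \eqref{eq:transfer} to the $3\times3$ matrix $\Mbar$ of Lemma~\ref{lem:lumping}, and then bound the powers of $\Mbar$ with an explicit positive test vector.

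\emph{Step 1: reduction to $\Mbar$.} Let $\mathbf 1_{\mathcal K}$ denote the indicator vector of a class $\mathcal K$. Lemma~\ref{lem:lumping} says exactly that $M\mathbf 1_{\mathcal K}=\sum_{\mathcal K'}\Mbar_{\mathcal K',\mathcal K}\mathbf 1_{\mathcal K'}$. So every vector constant on classes is mapped by $M$ to a vector constant on classes, and on such vectors $M$ acts as $\Mbar$. By induction, $M^{h-1}\mathbf 1$ is the class-constant vector with values $\Mbar^{h-1}\mathbf 1_3$, where $\mathbf 1_3=(1,1,1)^\top$. Put $\nu=(p,p,v-p)$ for the class sizes. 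Then
\[
\sum_{W\in T^h}P(W)^2=\nu^\top\Mbar^{h-1}\mathbf 1_3 .
\]

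\emph{Step 2: test vector.} Take $z=(p+1,1,1)^\top$. Then $z\ge\mathbf 1_3$ entrywise and $\nu^\top z=p(p+1)+p+(v-p)=v+p^2+p$, which is the constant in the statement. Set $\lambda=1-1/(4Q)$. It suffices to prove $\Mbar z\le\lambda z$ entrywise. Since $\Mbar$ is nonnegative, this gives
\[
\nu^\top\Mbar^{h-1}\mathbf 1_3\le\nu^\top\Mbar^{h-1}z\le\lambda^{h-1}\nu^\top z .
\]

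\emph{Step 3: the three row inequalities.} This is where the real work lies. The inputs are $v=Q^2-Q+1$, $a=1/Q^2$ and $b=Q^2/v^2$. From these, $av=1-1/Q+1/Q^2$ and $bv=Q^2/v=1+(Q-1)/v<1+1/Q+O(Q^{-2})$.

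For row $\mathcal O$, the quantity to bound is $ap(p+1)+bp+a(v-p-1)=av+ap^2-a+bp\le 1-1/Q+O(p^2/Q^2)$.

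For row $\mathcal S$, the quantity is $a(p^2-1)+bp+a(v-p)$, which obeys the same bound.

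Both must be at most $1-1/(4Q)$. This needs $p^2/Q$ to be small compared with $1$. Here the choice $q=p^3$ enters: it gives $p^2\le Q^{2/3}$, so the error is $O(Q^{-4/3})$. With $q=p^2$ the error $ap^2$ would be of order $1/Q$ and the argument would break. I will make the constants explicit for all odd $p\ge3$, using $Q\ge28$, and check the smallest case numerically if needed.

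For row $\mathcal E$, the quantity is $bp(p+1)+(p-1)+b(v-p)=(p-1)+b(v+p^2)\le p+1/Q+O(p^2/Q^2)$. The target is $\lambda(p+1)=p+1-(p+1)/(4Q)$, which leaves a margin of almost $1$, so this row is easy. The entry $p-1$, flagged earlier as the danger, is absorbed precisely by the weight $z_{\mathcal E}=p+1$.

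I expect the main obstacle to be the bookkeeping in rows $\mathcal O$ and $\mathcal S$: the saving over $1$ there is only $\approx 3/(4Q)$, so the second-order terms have to be tracked honestly.

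\emph{Step 4: the consequence.} Write $C=v+p^2+p$. Then $\lambda^{h-1}\le e^{-(h-1)/(4Q)}$. We need $\log C-(h-1)/(4Q)\le -2\delta h=-h/(8Q)$, which is equivalent to $h/(8Q)\ge\log C+1/(4Q)$. Multiplying by $8Q$, this is exactly $h\ge 8Q\log C+2$, the stated range.
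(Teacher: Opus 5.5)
Your proposal is correct and follows the paper's proof essentially step for step. It uses the same reduction of $\mathbf 1^{\top}M^{h-1}\mathbf 1$ to the lumped matrix $\Mbar$, the same test vector $(p+1,1,1)$ with contraction factor $1-1/(4Q)$, the same three row expressions, and the same derivation of the threshold $h\ge 8Q\log(v+p^2+p)+2$. Your route differs only in leaving the row $\mathcal S$ and row $\mathcal O$ bookkeeping asymptotic, which the paper makes explicit via $b<(Q-1)^{-2}$, $Q^2/(Q-1)^2<1.08$ and $p^2\le Q/3$, reducing both rows to $p^2+1.08p\le 3Q/4$; your own bounds $p^2<Q^{2/3}$ and $Q\ge 28$ close them just as easily, since the error terms stay below $0.45/Q$ against a budget of $3/(4Q)$.
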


\begin{proof}
Let $\varphi=(p+1,1,1)$ on $(\mathcal E,\mathcal S,\mathcal O)$ and $\theta=1-1/(4Q)$. We show $\Mbar\varphi\le\theta\varphi$ entrywise. Granting this, let $\tilde\varphi$ be the lift of $\varphi$ to $T$. By Lemma~\ref{lem:lumping}, $M\tilde\varphi\le\theta\tilde\varphi$; since $\mathbf 1\le\tilde\varphi$ and $M$ is nonnegative,
\[
\mathbf 1^{\top}M^{h-1}\mathbf 1\le\theta^{h-1}\,\mathbf 1^{\top}\tilde\varphi=\theta^{h-1}\bigl(p(p+1)+p+(v-p)\bigr)=\theta^{h-1}(v+p^2+p).
\]
We use three elementary facts, valid because $Q=p^3+1\ge28$:
\begin{enumerate}[label=(F\arabic*)]
\item $v=Q^2-Q+1>Q(Q-1)$, hence $b<(Q-1)^{-2}$ and $bv<Q/(Q-1)$;
\item $Q^2/(Q-1)^2\le(28/27)^2<1.08$;
\item $p^2\le Q/3$, since $p^3\ge3p^2$ for $p\ge3$.
\end{enumerate}

\emph{Row $\mathcal E$.} We have $(\Mbar\varphi)_{\mathcal E}=bp(p+1)+(p-1)+b(v-p)=(p-1)+bv+bp^2$. By (F1) and (F3),
\[
(\Mbar\varphi)_{\mathcal E}<(p-1)+1+\frac{1}{Q-1}+\frac{Q}{3(Q-1)^2}\le p+\frac{1}{27}+\frac{28}{3\cdot 27^2}<p+0.05 .
\]
On the other hand $\theta(p+1)=p+1-(p+1)/(4Q)$, and $(p+1)/(4Q)=1/\bigl(4(p^2-p+1)\bigr)\le1/28$. Hence $\theta(p+1)>p+0.96$.

\emph{Row $\mathcal S$.} Using $v=Q^2-Q+1$,
\[
(\Mbar\varphi)_{\mathcal S}=a(p^2-1)+bp+a(v-p)=1-\frac{Q+p-p^2}{Q^2}+bp .
\]
The inequality $(\Mbar\varphi)_{\mathcal S}\le\theta$ is equivalent to $Q/4+bpQ^2\le Q+p-p^2$. By (F1) and (F2), $bpQ^2<1.08p$, so it suffices that $p^2+0.08p\le 3Q/4$. This holds because $3Q/4\ge 9p^2/4$ by (F3).

\emph{Row $\mathcal O$.} Similarly
\[
(\Mbar\varphi)_{\mathcal O}=ap(p+1)+bp+a(v-p-1)=1-\frac{Q-p^2}{Q^2}+bp ,
\]
and $(\Mbar\varphi)_{\mathcal O}\le\theta$ follows from $p^2+1.08p\le3Q/4$, which again holds since $3Q/4\ge9p^2/4$ and $5p^2/4\ge1.08p$.

Finally, $(v+p^2+p)\theta^{h-1}\le(v+p^2+p)e^{-(h-1)/(4Q)}\le e^{-h/(8Q)}=e^{-2\delta h}$ as soon as $h\ge8Q\log(v+p^2+p)+2$.
\end{proof}

\begin{remark}[The exponent]\label{rem:exponent}
Two features of $\Mbar$ are close to critical. The $\mathcal O\to\mathcal O$ entry is about $1-1/q$, and the loop $\mathcal E\to\mathcal S\to\mathcal E$ has weight $(p-1)^2/(q+1)^2$. The loop must be small compared with the gap $1/q$, which requires $q$ to be large compared with $p^2$. For $q=p$ the spectral radius of $\Mbar$ exceeds $1$ (it is about $1.33$ for $p=3$). For $q=p^2$ it is below $1$ for the primes $p\le13$ we computed, but the test vector above fails; we have not pursued this case. Table~\ref{tab:radius} compares the actual spectral gap for $q=p^3$ with the gap $1/(4Q)$ proved above.
\end{remark}

\begin{table}[ht]
\centering
\begin{tabular}{@{}rrccc@{}}
\toprule
$p$ & $q=p^3$ & spectral radius $\rho(\Mbar)$ & $1-\rho(\Mbar)$ & $1/(4Q)$\\
\midrule
3 & 27 & 0.97646 & $2.35\times10^{-2}$ & $8.93\times10^{-3}$\\
5 & 125 & 0.99366 & $6.34\times10^{-3}$ & $1.98\times10^{-3}$\\
7 & 343 & 0.99751 & $2.49\times10^{-3}$ & $7.27\times10^{-4}$\\
11 & 1331 & 0.99932 & $6.83\times10^{-4}$ & $1.88\times10^{-4}$\\
13 & 2197 & 0.99958 & $4.20\times10^{-4}$ & $1.14\times10^{-4}$\\
\bottomrule
\end{tabular}
\caption{Spectral gap of the lumped matrix against the proved gap.}\label{tab:radius}
\end{table}

\section{Transfer of the geometric arguments}\label{sec:transfer}

We now explain how \cite[Sections~3--5]{OAI26} apply to the graphs of Section~\ref{sec:random}. Throughout, $x_A\in A^+$ and $x_B\in B$ are fixed before sampling.

\subsection{Bounded patterns}
We use the notion of a \emph{system} from \cite[Section~3]{OAI26}. It consists of finitely many positive-length paths in $\Gamma$, all cyclically immersed and closed except possibly one immersed interval path starting at $x_A$ or $x_B$. Disjoint occurrence intervals are paired in twos by length-preserving isometries; an order-preserving comparison requires equal letters and an order-reversing one inverse letters. The pairing is subject to the \emph{reduction condition}: paired occurrences traverse distinct undirected edges of $\Gamma$. Positions outside the paired intervals are \emph{unpaired}.

\begin{proposition}[Bounded-pattern estimate]\label{prop:bounded}
There is $\eps=\eps(p)>0$ with the following property. For all fixed $K\ge1$, $C\ge1$ and $I\ge0$, the probability that $\Gamma$ carries a system with at most $K$ paths, total length $H$ with $L\le H\le CL$, at most $I$ interval pairs and at most $\eps H$ unpaired occurrences tends to zero as $n\to\infty$.
\end{proposition}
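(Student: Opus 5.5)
We follow the first-moment argument of the bounded-pattern estimate in \cite[Section~3]{OAI26}, changing only the inputs that depend on the design: the switching bound (Corollary~\ref{cor:prescriptions}) and the squared-word decay (Proposition~\ref{prop:decay}). The plan is to bound the expected number of systems with the stated parameters and show that it tends to zero. A system is encoded by a combinatorial \emph{pattern}: the number $k\le K$ of paths, their lengths (at most $(CL)^K$ choices), the positions and lengths of the paired intervals together with the pairing and the orientations, and the positions of the unpaired occurrences. Since $I$ and $K$ are fixed, the interval pairs contribute at most $(CL)^{O(I+K)}=e^{o(L)}$ choices. The $\eps H$ unpaired positions contribute at most $\binom{H}{\eps H}\le e^{H\eta(\eps)}$, where $\eta(\eps)\to0$ as $\eps\to0$. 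The remaining pairing is generated by at most $O(K+I)$ paired blocks.

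For a fixed pattern we count realizations by first choosing a word for each independent block. For every letter position we then choose the vertex, and we charge the edges through Corollary~\ref{cor:prescriptions}. The relevant quantities are the number $H$ of occurrences, the number $E$ of distinct edges, and the number $V$ of vertices. Because of the reduction condition, paired occurrences read the same word but traverse distinct edges, so each distinct edge costs a factor $(nf/2)^{-1}$ or $(nf)^{-1}$. Each free vertex gains a factor at most $n$. Each occurrence of a turn from $t$ to $u$ restricts the choice of vertex to a proportion $w(t,u)$ of the vertices containing $\bar t$; this is where Lemma~\ref{lem:weights} enters.

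Following \cite{OAI26}, the two readings of a paired block together contribute $P(W)^2$ for its word $W$. The $n$-powers then balance to $n^{V-E}$ up to $e^{o(L)}$. Since the graph formed by the paths is a union of at most $K$ closed paths and one interval, with bounded branching, we have $V-E\le O(K+I)$. Here, as in the source, the closed paths contribute cycle rank, which is paid for by the frequency factors $f$.

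Summing over words, each paired block of length $h$ contributes $\sum_{W\in T^h}P(W)^2$. By Proposition~\ref{prop:decay}, this is at most $e^{-2\delta h}$ once $h\ge 8Q\log(v+p^2+p)+2$. Shorter blocks are bounded by the constant $v+p^2+p$, and there are at most $O(K+I)$ such blocks. Since all but $\eps H$ of the $H$ occurrences are paired, the paired blocks have total length at least $(1-\eps)H/2$. An unpaired occurrence costs at most a factor $\lvert T\rvert$, together with the minimum weight $1/(q+1)$ in the normalisation. The total expectation is therefore at most
\[
e^{o(L)}\,n^{O(K+I)\cdot 0}\,e^{H\eta(\eps)}\,\lvert T\rvert^{\eps H}(q+1)^{O(\eps H)}\,e^{-\delta(1-\eps)H}.
\]
We choose $\eps=\eps(p)$ so small that the exponent is at most $-\delta H/2$. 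Since $H\ge L=\lfloor c\log n\rfloor$, this tends to zero, provided that the $e^{o(L)}$ terms and the combinatorial counts $e^{o(L)}$ are absorbed. They are absorbed because $K$, $C$ and $I$ are fixed.

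Two new features need bookkeeping. The first is the signed model on $A$: the sub-matchings there have size $nf/2$, which contributes an extra factor of $2$ per edge. This factor must be absorbed into the realization count; it matches the ``bounded extra factor'' mentioned in the introduction. The bipartite constraint only removes realizations. The second is the extra letters: a turn between two extras has weight $1$, and this is exactly the $p-1$ entry controlled in Proposition~\ref{prop:decay}.

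The main obstacle is the $n$-power balance, $V-E\le O(1)$, together with the claim that each vertex choice is correctly weighted by $w(t,u)$ rather than merely by $f$. This must hold uniformly when the paired intervals overlap the same vertices, since there the vertex choices are not independent. We would handle it, as in \cite{OAI26}, by exposing vertices along the paths in order. A vertex that is forced by an earlier exposure is charged no factor $n$, and the corresponding edge is then charged only if it is new. We would check that the ratio between the gain for a free vertex and the cost for its edge is exactly $w(t_i,t_{i+1})$ after normalisation by Lemma~\ref{lem:freq}.
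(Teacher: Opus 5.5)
There is a genuine gap at the one new point of this proposition, the signed model on $A$. Your list of design-dependent inputs matches the paper: Proposition~\ref{prop:decay} replaces \cite[Lemma~2.1]{OAI26}, Corollary~\ref{cor:prescriptions} replaces the prescription bound, and the minimum turn weight $1/(q+1)$ handles unpaired positions (the paper's $a_0=2\log(q+1)$).

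The problem is the factor coming from Corollary~\ref{cor:prescriptions}. It charges $2/(nf)$ for each prescribed edge on $A$, so the extra factor is $2$ \emph{per edge}, hence $2^{E^A}$ in total. In general $E^A$ is of order $H\ge L$, so this factor is not bounded.

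With your vertex counts, a free vertex on $A$ has at most $n$ images. A chain vertex with turn $(t,u)$ has at most $nf\,w(t,u)$ images, which is what $A=A^+\sqcup A^-$ gives when colours are ignored. Each step of a chain on $A$ then contributes $2w(t,u)$ instead of $w(t,u)$. The resulting $2^{E^A}=e^{(\log 2)E^A}$ swamps the decay $e^{-\delta H/2}$, because $\delta=1/(16(q+1))$ is far below $\log 2$, and no choice of $\eps$ repairs this. Your remark that the bipartite constraint ``only removes realizations'' is exactly where the idea is missing: that constraint is what pays for the factor $2$.

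The paper refines each pattern by a $2$-colouring $\chi$ of its $A$-part $\Delta^A$, recording which vertices are to lie in $A^+$. This multiplies the number of patterns by at most $2^K$, one choice per component. Patterns with an odd cycle in $\Delta^A$ have probability $0$.

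With colours prescribed, a marked vertex on $A$ has at most $n/2$ images. An internal chain vertex with turn $(t,u)$ has at most $(n/2)f\,w(t,u)$ images, because $A^+$ and $A^-$ carry identical type proportions. The stage-$j$ realization count then carries only the net factor $2^{E^A_j-V^A_j+k^A_j}$. Here $E^A_j-V^A_j$ is at most the cycle rank of $\Delta$, which \cite[Section~3]{OAI26} bounds in terms of $C$ and $K$. Only at this point is the factor bounded and absorbable into $e^{o(L)}$; this net factor, not the per-edge factor $2$, is the ``bounded extra factor'' of the introduction.

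A smaller point: your bound $V-E\le O(K+I)$ would not suffice. Any positive exponent of $n$ beats the decay, which at $H=L$ is only about $n^{-c\delta/2}$. What is needed is a nonpositive exponent of $n$ once the root is fixed. Your display $n^{O(K+I)\cdot 0}$ silently assumes this.
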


\begin{proof}
We follow the proof of \cite[Proposition~3.1]{OAI26} and indicate the changes.

\emph{Inputs.} Proposition~\ref{prop:decay} replaces \cite[Lemma~2.1]{OAI26}, with $\delta=1/(16(q+1))$. By Lemma~\ref{lem:weights}, every immersed word $W$ of length $h$ satisfies $P(W)\ge(q+1)^{-(h-1)}$, so \cite[(3.9)]{OAI26} holds with $a_0=2\log(q+1)$. Corollary~\ref{cor:prescriptions} replaces \cite[(2.7)]{OAI26}.

\emph{Signs in the patterns.} Let $\Delta$ be the image graph of a candidate system and $\Delta^A$ the union of its components on side $A$. We enlarge each unlabelled pattern of \cite{OAI26} by a $2$-colouring $\chi$ of $\Delta^A$, recording which vertices are to be realized in $A^+$. Since $\Delta$ has boundedly many components, this multiplies the number of patterns by a bounded factor. A pattern for which $\Delta^A$ contains an odd cycle cannot be realized in the bipartite graph $\Gamma_A$ and has probability zero.

\emph{The realization count.} Fix a stage $j$ and a permitted labelling, as in \cite[(3.10)]{OAI26}. Write $E^Y_j,V^Y_j,Q^Y_j$ for the numbers of edges, vertices and active chains of $\Delta_j$ on side $Y\in\{A,B\}$, and $k^Y_j\le1$ for the number of fixed root vertices. On side $A$, a marked vertex of prescribed colour has at most $n/2$ images, and an internal chain vertex with incoming letter $t$ and outgoing letter $u$ has at most $(n/2)f\,w(t,u)$ images, since $A^+$ and $A^-$ carry identical type proportions. On side $B$ the corresponding bounds are $n$ and $nf\,w(t,u)$. By Corollary~\ref{cor:prescriptions}, the $E^A_j+E^B_j$ distinct prescriptions cost at most $(2/(nf))^{E^A_j}(1/(nf))^{E^B_j}e^{o(L)}$. Multiplying, the expected number of realizations is at most
\[
2^{\,E^A_j-V^A_j+k^A_j}\;n^{\,V_j-E_j-k_j}\,f^{-Q_j}\prod w(t,u)\;e^{o(L)},
\]
where the product runs over internal chain turns. Apart from the first factor this is the bound of \cite[(3.10)]{OAI26}. Now $E^A_j-V^A_j$ is at most the cycle rank of the $A$-part of $\Delta_j$, which is at most the cycle rank of $\Delta$, and \cite[Section~3]{OAI26} shows that the latter is bounded in terms of $C$ and $K$. The first factor is therefore bounded and is absorbed into $e^{o(L)}$.

\emph{Conclusion.} The remaining steps of \cite[Section~3]{OAI26} --- multiplicity stages and the vertex-exponent inequality \cite[(3.4)]{OAI26}, aligned blocks, the treatment of self-links, binning, and stage selection --- concern traversal counts and words, not types, and apply verbatim. The resulting constant is $\eps=\min\{1/4,\ \delta/(16(1+a_0))\}$, which depends only on $p$, so the order of quantifiers is as in \cite{OAI26}.
\end{proof}

\subsection{Spherical arrangements}
A \emph{reduced spherical arrangement over $\Gamma$} is defined as in \cite[Definition~4.1]{OAI26}.

\begin{proposition}\label{prop:noarrangement}
With probability $1-o(1)$, $\Gamma$ admits no reduced spherical arrangement. In particular, for all sufficiently large $n\in2v^2\Z$ there is an outcome of Construction~\ref{con:matchings} with $\girth(\Gamma)\ge L$, the diameter bound of Lemma~\ref{lem:diameter}, and no reduced spherical arrangement.
\end{proposition}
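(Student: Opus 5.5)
The plan is to follow \cite[Section~4]{OAI26}, which reduces the absence of reduced spherical arrangements to the bounded-pattern estimate. The only new input is Proposition~\ref{prop:bounded}, which replaces \cite[Proposition~3.1]{OAI26}. The argument has three parts.

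\emph{Step 1: reduction to bounded patterns.} Suppose $\Gamma$ carries a reduced spherical arrangement. The reduction of \cite[Section~4]{OAI26} uses only planarity of the sphere and planar separation \cite{LT79}, together with the diameter bound $D_0L$ of Lemma~\ref{lem:diameter} and the girth bound $\girth(\Gamma)\ge L$. From these it extracts a system in the sense of \cite[Section~3]{OAI26} with the following properties:
\begin{enumerate}[label=(\alph*)]
\item at most $K$ paths;
\item total length $H$ with $L\le H\le CL$;
\item at most $I$ interval pairs;
\item at most $\eps H$ unpaired occurrences.
\end{enumerate}
Here $K$, $C$ and $I$ depend only on $\eps$, $D_0$ and $d_*$. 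The reduction condition is inherited from reducedness of the arrangement.

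I will check that none of these steps uses the specific types. They involve only the degree bound $d_*$, the girth, the diameter, and the immersion into the rose $F$. I will also check that the bipartite structure of $\Gamma_A$ plays no role, since the arrangement's cells are cones on components of $\Gamma_A$ or $\Gamma_B$ regardless of signs. The constants then depend only on $p$, and $\eps=\eps(p)$ from Proposition~\ref{prop:bounded} can be fed in before choosing $K$, $C$, $I$. That preserves the order of quantifiers.

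\emph{Step 2: union bound.} Proposition~\ref{prop:bounded} says that for these fixed $K$, $C$, $I$ such a system occurs with probability $o(1)$. Hence, with probability $1-o(1)$, no reduced spherical arrangement exists.

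\emph{Step 3: the ``in particular''.} Lemma~\ref{lem:switch} guarantees that the conditioning event $\girth(\Gamma)\ge L$ is nonempty for large $n$. Lemma~\ref{lem:diameter} holds with probability $1-o(1)$, and so does the conclusion of Step 2. Intersecting these two events gives positive probability for large $n\in2v^2\Z$, so some outcome of Construction~\ref{con:matchings} has all three properties.

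\emph{Main obstacle.} The hard part is Step 1, specifically confirming that the extraction in \cite[Section~4]{OAI26} yields a bounded number $K$ of paths and $I$ of interval pairs with constants uniform in $n$. This requires the diameter bound $D_0L$, which holds here with a new $D_0(p)$. It also requires that the short-separator argument cuts long boundary paths into pieces of length between $L$ and $CL$.

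I would first re-derive the separator lengths in terms of $D_0$ and the girth. Then I would confirm that the count of unpaired occurrences per separator piece is $O(1)$, so it can be made at most $\eps H$ by taking $H\ge L$ large.
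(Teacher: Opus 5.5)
You follow the paper's route: rerun the proof of \cite[Proposition~4.5]{OAI26}, supply Proposition~\ref{prop:bounded} and Lemma~\ref{lem:diameter} as its probabilistic inputs, choose constants starting from $\eps(p)$, and intersect two events of probability $1-o(1)$. The gap is in your list of what Section~4 of \cite{OAI26} uses.

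You assert that the reduction involves only the maximum degree $d_*$, the girth, the diameter and the immersion. The proof of \cite[Proposition~4.5]{OAI26} also uses the closing lemma \cite[Lemma~4.2]{OAI26}. Its hypothesis is a \emph{lower} bound on degrees: girth at least $3$, and minimum degree at least $3$ after deleting two edges at a vertex. This is the point where the type design enters Section~4, through the minimum degree rather than through $d_*$. The whole proof consists of checking that the hypotheses of the cited arguments survive the new construction, so omitting this one leaves the check incomplete, and your claim that nothing beyond $d_*$, girth, diameter and immersion matters is false as stated.

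The gap is filled at once. Every vertex has degree $q+1$ or $q+1+p$, so the minimum degree is $q+1\ge 28$, and $\girth(\Gamma)\ge L\ge 3$ for large $n$. The remaining planar ingredients, \cite[Lemmas~4.3 and~4.4]{OAI26}, do use only immersion and girth, as you expect.

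Your announced ``main obstacle'' is unnecessary work. You plan to re-derive separator lengths and unpaired-occurrence counts, but those steps do not see the types. The paper simply takes them over with the constants fixed in the order of \cite[(4.5)]{OAI26}; the only new constants are $\eps(p)$ and $D_0(p)$.
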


\begin{proof}
The proof of \cite[Proposition~4.5]{OAI26} uses three probabilistic inputs: the bounded-pattern estimate, the diameter bound, and the closing lemma \cite[Lemma~4.2]{OAI26}. We supply the first two by Proposition~\ref{prop:bounded} and Lemma~\ref{lem:diameter}. The closing lemma needs only girth at least $3$ and minimum degree at least $3$ after deleting two edges at a vertex; here the minimum degree is $q+1\ge28$. The remaining ingredients, \cite[Lemmas~4.3 and 4.4]{OAI26}, concern planar arrangements and planar graphs; they use only immersion and girth, which hold here. The constants are chosen in the order of \cite[(4.5)]{OAI26}, starting from the $\eps$ of Proposition~\ref{prop:bounded}.
\end{proof}

\subsection{The cone complex}
Let $F$ be the rose with $(v+p)/2$ edges. For each component $\Lambda$ of $\Gamma$, attach the abstract cone $K_\Lambda$ on $\Lambda$ to $F$ along the labelled map $\Lambda\to F$. Let $X$ be the resulting finite two-dimensional complex and $G_p=\pi_1(X)$, which is finitely presented. The labels of all closed walks in $\Gamma$ are trivial in $G_p$.

\begin{proposition}\label{prop:topology}
If $\Gamma$ admits no reduced spherical arrangement, then:
\begin{enumerate}[label=\textup{(\roman*)}]
\item the universal cover of $X$ is contractible, so $X$ is a finite two-dimensional classifying space for $G_p$ and $G_p$ is torsion-free;
\item every path in $\Gamma$ from $x_A$ or $x_B$ to a different vertex has nontrivial label in $G_p$.
\end{enumerate}
\end{proposition}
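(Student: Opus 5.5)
The plan is to follow \cite[Section~5]{OAI26} essentially verbatim, since both conclusions are topological consequences of the absence of reduced spherical arrangements and use no information about types, signs or the residue condition. The only inputs about $\Gamma$ that the argument needs are that $\Gamma\to F$ is an immersion and that $\Gamma$ has girth at least $3$ (indeed at least $L$). We have both by Construction~\ref{con:matchings} and our conditioning.

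For (i), I would build the universal cover $\widetilde X$ and show that $\pi_2(X)=0$. Because $X$ is two-dimensional, $\widetilde X$ is simply connected and $H_2(\widetilde X)=\pi_2(X)$. Contractibility then follows from Hurewicz and Whitehead, once we know that $\widetilde X$ has no nontrivial $2$-cycles. Take a spherical map $S^2\to X$, or rather a nontrivial class in $\pi_2$. Put it in general position with respect to the cone points and the rose. Pulling back the cone structure then gives a cellular decomposition of the sphere into pieces mapping to cones $K_\Lambda$, glued along paths that read words in $F$. Each cone piece reads a closed walk in some component $\Lambda$.

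Next comes the reduction. Two adjacent pieces in the same cone that traverse the same undirected edge in a folding manner can be cancelled by the cone-picture surgery of \cite{OAI26}, which lowers complexity. A minimal-complexity nontrivial class therefore yields a \emph{reduced} spherical arrangement over $\Gamma$ in the sense of \cite[Definition~4.1]{OAI26}, contradicting the hypothesis. The point needing care is that the surgery must not produce a trivial diagram from a nontrivial class. I would argue this with an asphericity criterion, as in \cite{OAI26}: the complex is built so that every spherical diagram reduces to the empty one. Torsion-freeness follows because a finite-dimensional $K(G,1)$ forces torsion-freeness, as finite cyclic groups have infinite cohomological dimension.

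For (ii), suppose a path $\gamma$ in $\Gamma$ from $x_A$ (say) to a vertex $x\neq x_A$ has trivial label. There is then a van Kampen-type disc diagram over the cone presentation whose boundary reads the label of $\gamma$. I would glue this disc to the cone on the path itself, or equivalently attach the interval path as the distinguished non-closed path allowed in systems. Since $\gamma$ is an immersed interval in the graph, closing it up gives a sphere-like arrangement with one interval boundary piece. After the same reduction this becomes a reduced spherical arrangement in the extended sense used in \cite{OAI26}, where the root path is permitted. The hypothesis excludes that, since Proposition~\ref{prop:noarrangement} covers arrangements containing the root interval, as in \cite[Section~4]{OAI26}.

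I expect the main obstacle to be this second part. One must check that the reduction cannot fold the interval path $\gamma$ onto itself, which would collapse it to the trivial path. The hypothesis $x\neq x_A$, together with $\gamma$ being immersed, so that after reduction its label is a nonempty reduced path in the tree cover, should prevent the arrangement from degenerating. Bipartiteness and signs play no role, so no bookkeeping beyond \cite{OAI26} should be needed.
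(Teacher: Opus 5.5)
Your proposal is correct and takes the same route as the paper, whose proof simply invokes \cite[Proposition~5.1 and Corollary~5.3]{OAI26} on exactly the grounds you give. Those arguments (cone pictures, minimization and band surgery, Hurewicz--Whitehead, and torsion-freeness from the finite free resolution by the cellular chains of $\widetilde X$) use only the absence of reduced spherical arrangements, and nothing about types, signs or the residue condition; the one step to tighten in your sketch of (i) is the reduction, which should say that each surgery replaces a sphere by spheres of smaller complexity whose classes sum to the original one, so that a minimal representative of a nontrivial class must be a nonempty reduced arrangement.
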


\begin{proof}
This is \cite[Proposition~5.1 and Corollary~5.3]{OAI26}. Their proofs use only the absence of reduced spherical arrangements together with general topology: cone pictures, minimization of boundary length, band surgery, and the Hurewicz and Whitehead theorems \cite{Hat02}. Torsion-freeness follows from the finite-length free resolution given by the cellular chains of the universal cover \cite{Bro82}.
\end{proof}

\section{Proof of Theorem~\ref{thm:main}}\label{sec:proof}

Fix an odd prime $p$, put $q=p^3$, and fix $x_A\in A^+$ and $x_B\in B$.
\begin{enumerate}[label=\arabic*.]
\item By Proposition~\ref{prop:noarrangement}, for some large $n\in2v^2\Z$ there is an outcome of Construction~\ref{con:matchings} with $\girth(\Gamma)\ge L$ that admits no reduced spherical arrangement. Fix it.
\item Build $X$ and $G_p=\pi_1(X)$ as in Section~\ref{sec:transfer}. By Proposition~\ref{prop:topology}, $G_p$ is finitely presented and torsion-free, $X$ is a finite two-dimensional classifying space for it, and root separation holds.
\item Let $A'$ and $B'$ be the vertex sets of the components of $x_A$ and $x_B$, and let $\phi\colon\mathbb F(T)\to G_p$ be the natural map. Define $\alpha$ and $\beta$ as in Lemma~\ref{lem:signed}, with $\eps=+1$ on $A^+$ and $\eps=-1$ on $A^-$. Hypothesis (i) holds because $\Gamma_A$ is bipartite, and hypothesis (ii) holds with $d=1$ by Lemma~\ref{lem:residue}. Hence $\alpha\beta=0$ in $\F_p[G_p]$.
\item By root separation, $x_A$ is the only vertex of $A'$ with $g_x=1$, and $x_B$ the only vertex of $B'$ with $h_y=1$. The identity coefficients of $\alpha$ and $\beta$ are therefore $\eps(x_A)=1$ and $1$, so $\alpha\neq0$ and $\beta\neq0$.\qed
\end{enumerate}

\section{Further remarks}\label{sec:remarks}

\subsection{Characteristic two}
For $p=2$, Lemma~\ref{lem:signed} is the cancellation of \cite{OAI26}. Our design does not specialize to $p=2$: with two extras, $\lvert T\rvert=v+2$ would be odd and no fixed-point-free involution would exist. The asymmetric three-extra design of \cite{OAI26} avoids this.

\subsection{Characteristic zero}
Over $\Z$, Step~3 of Lemma~\ref{lem:signed} shows that the contribution of a component $C$ to $\alpha\beta$ is $\bigl(\sum_{(x,y)\in C}\eps(x)\bigr)g_C$, and that $d$ times the signed count vanishes if every degree in $C$ equals $d$ exactly. A version of the construction with $\lvert S_x\cap S_y\rvert=d$ for all $x\in A$, $y\in B$ would therefore give zero divisors over $\Q$. Families with constant cross-intersections exist, but the obvious ones destroy the decay estimate: if, for instance, every type on $B$ equals one set $S$, then $w(t,u)=1$ for all non-cancelling turns inside $S$, and $\sum_WP(W)^2$ grows exponentially. We do not know a design that combines exact constancy with squared-word decay. A rational counterexample of any kind would contradict the strong Atiyah conjecture for the group concerned, since that conjecture implies the zero-divisor conjecture over $\C$ for torsion-free groups \cite{Lin93,Lue02}.

\subsection{Uniformity in $p$}
The groups $G_p$ depend on $p$, so no single group is shown to violate the conjecture in two different characteristics. If one fixed group $G$, with fixed finite supports for $\alpha$ and $\beta$, admitted solutions over $\F_p$ for infinitely many $p$, one could normalize one coefficient of each of $\alpha$ and $\beta$ to $1$ along an infinite set of primes, and elimination theory (the Nullstellensatz applied to the resulting system of polynomial equations over $\Z$) would give a solution over $\overline{\Q}$, though not necessarily over $\Q$. Since $\overline{\Q}\subseteq\C$, this would give zero divisors in $\C[G]$. Our construction provides no such uniformity.

\subsection{Explicitness}
As in \cite{OAI26}, the argument is probabilistic. It proves that suitable matchings exist for large $n$ but does not produce a presentation of $G_p$ or the elements $\alpha,\beta$ explicitly. The alphabet alone has $760$ letters for $p=3$.

\subsection{Unique products}
Since $\F_p[G_p]$ has zero divisors, $G_p$ does not have the unique-product property.

\appendix

\section{A check of the arguments of \cite{OAI26}}\label{app:verification}

Our proof relies on Sections~3--5 of \cite{OAI26} and on the lemmas of its Section~2 that they use. We went through every numbered statement, displayed formula and inference in these sections, re-derived each step, and recomputed the constants exactly. We found no error that affects the conclusions. The following six points deserve a clarifying sentence; none changes a statement or a constant that matters. This check was carried out with the assistance of an AI model (see the Acknowledgements). 

\begin{remark}[Uniformity in the switching lemma]
In \cite[Lemma~2.2]{OAI26} the error $r_n$ counts the edges meeting the $2L$-neighbourhood of $\{x,y\}$, plus one. It is therefore $O(d_*^{2L+2})$ for every set of prescriptions, as claimed. The injectivity of the switching map follows because the output determines the switched edge: the new neighbour of $x$ along $t$ is $z$, and the new neighbour of $y$ along $\bar t$ is $u$.
\end{remark}

\begin{remark}[Joins of block occurrences]
In the proof of \cite[Proposition~3.1]{OAI26}, the claim that joins form a partial matching of block occurrences uses $\kappa<1/2$, so that a compared substring occupies more than half of its block. This holds for large $L$ because $\kappa\to0$, but it is not stated.
\end{remark}

\begin{remark}[Vertex images on side $A$]
The bound $np\,w(t,u)$ for the number of images of an internal chain vertex, used in \cite[(3.10)]{OAI26}, needs the conditional frequencies on side $A$ to be at most the weights computed on side $B$. Entrywise, the only differing entry is the extra--extra one, which is $1/2$ on $A$ against $1$ on $B$.
\end{remark}

\begin{remark}[Same-arc digons]
In \cite[Lemma~4.3]{OAI26}, a complementary disk digon whose two sides come from the same arc requires both ends of that arc to lie on disks with a single arc end, hence with a boundary of length one. A cyclically immersed closed path of length one is a loop, which girth excludes, and at most one of the two disks can be the exceptional one. So the case is vacuous. The coarse bound $2N$ is harmless, and \cite[(4.2)]{OAI26} in fact gives $B_{\mathrm{gap}}\le 8N-8$.
\end{remark}

\begin{remark}[Splitting long paths]
In the proof of \cite[Proposition~4.5]{OAI26}, a path of length $h>UL$ is split into $m=\lceil h/(UL)\rceil$ segments. Then $m\le 2h/(UL)$, so the stated bound $m\le3h/(UL)$ is loose but correct.
\end{remark}

\begin{remark}[Exterior surgery]
In \cite[Section~5.3]{OAI26}, cases 2 and 4 modify the exterior disk. In each row of the splice table the new outer path joins the original distinct endpoints, and the retained domain is a disk: in case 2 one removes from the domain disk an inner disk attached to its boundary by a band, and in case 4 one cuts the domain disk along a band with both ends on its boundary and keeps one of the two pieces. The boundary length drops by two before tightening.
\end{remark}

We also recomputed the arithmetic of \cite{OAI26}. The two row bounds in \cite[Lemma~2.1]{OAI26} are $0.993152\ldots$ and $0.601702\ldots$, both below $149/150$, and the threshold $h\ge3002$ is more than sufficient (the argument needs $h\ge2897$). The vertex counts of \cite[Section~2.1]{OAI26}, the inequalities \cite[(4.4)]{OAI26}, and the surviving proportion $5/16$ in the proof of \cite[Proposition~4.5]{OAI26} are all correct.

\section{Computations}\label{app:computations}

The following checks were run in Python with exact rational arithmetic where stated.

\begin{enumerate}[label=(\arabic*)]
\item \emph{Proposition~\ref{prop:decay}.} The inequality $\Mbar\varphi\le\theta\varphi$ was verified exactly for every odd prime $p<1000$ and $q\in\{p^3,p^4,p^5\}$.
\item \emph{Lumping.} For $p=3$ we constructed $\PG(2,27)$ from a model of $\F_{27}$, read the turn weights off its incidences (obtaining exactly the values $1/28$, $28/757$ and $1$), and formed the full $760\times760$ matrix $M$. Its spectral radius is $0.976461$, equal to that of $\Mbar$.
\item \emph{Signed cancellation.} For $p=q=3$ we built random graphs of the shape of Constructions~\ref{con:types} and~\ref{con:matchings} with $338$ vertices on each side. These graphs have small girth and give no torsion-free group; they test only the counting of Lemma~\ref{lem:signed}. The step graph has $28{,}245$ components. All but one have size $\equiv2\pmod 3$, so the unsigned count fails modulo $3$. Every component satisfies the identity of Step~3 exactly, and every signed count is divisible by $3$. The three largest components have sizes $38{,}808$, $131$ and $113$, with signed counts $0$, $3$ and $-3$.
\end{enumerate}

\section*{Acknowledgements}
This work benefited substantially from assistance by Claude, an AI model developed by Anthropic. Claude contributed to the development of the signed-cancellation argument and the type design in Sections~\ref{sec:signed}--\ref{sec:decay}, assisted with the computations in Appendix~\ref{app:computations} and the reading of \cite{OAI26} summarized in Appendix~\ref{app:verification}, and helped draft the manuscript. Claude is not an author and cannot assume responsibility for the work; responsibility for the content rests with the author. We thank the referee for helpful comments.


\begin{thebibliography}{99}

\bibitem{Ber19}
V.~Bereznyuk, \emph{Asphericity of groups defined by graphs}, Math. Notes \textbf{105} (2019), 316--328. \href{https://arxiv.org/abs/1711.05293}{arXiv:1711.05293}.

\bibitem{Bro82}
K.~S. Brown, \emph{Cohomology of Groups}, Graduate Texts in Mathematics, vol.~87, Springer, New York, 1982.

\bibitem{FSP26}
S.~P. Fisher and P.~S\'anchez-Peralta, \emph{Division rings for group algebras of virtually compact special groups and $3$-manifold groups}, J. Comb. Algebra \textbf{10} (2026), no.~1/2, 153--193.

\bibitem{Gar21}
G.~Gardam, \emph{A counterexample to the unit conjecture for group rings}, Ann. of Math. (2) \textbf{194} (2021), no.~3, 967--979.

\bibitem{Gar24}
G.~Gardam, \emph{Non-trivial units of complex group rings}, preprint, 2024. \href{https://arxiv.org/abs/2312.05240v2}{arXiv:2312.05240v2}.

\bibitem{GM25}
M.~Garg and I.~Mineyev, \emph{On zero-divisors and units in group rings of torsion-free CAT(0) groups}, preprint, 2025. \href{https://arxiv.org/abs/2501.07646v2}{arXiv:2501.07646v2}.

\bibitem{Gro03}
M.~Gromov, \emph{Random walk in random groups}, Geom. Funct. Anal. \textbf{13} (2003), no.~1, 73--146.

\bibitem{Gru15}
D.~Gruber, \emph{Groups with graphical $C(6)$ and $C(7)$ small cancellation presentations}, Trans. Amer. Math. Soc. \textbf{367} (2015), no.~3, 2051--2078.

\bibitem{Hat02}
A.~Hatcher, \emph{Algebraic Topology}, Cambridge University Press, Cambridge, 2002.

\bibitem{Hig40a}
G.~Higman, \emph{Units in group rings}, D.Phil. thesis, University of Oxford, 1940.

\bibitem{Hig40b}
G.~Higman, \emph{The units of group-rings}, Proc. London Math. Soc. (2) \textbf{46} (1940), 231--248.

\bibitem{Kap57}
I.~Kaplansky, \emph{Problems in the theory of rings}, in: Report of a Conference on Linear Algebras, June 6--8, 1956, National Academy of Sciences--National Research Council Publication 502, Washington, DC, 1957, pp.~1--3.

\bibitem{KLM88}
P.~H. Kropholler, P.~A. Linnell, and J.~A. Moody, \emph{Applications of a new $K$-theoretic theorem to soluble group rings}, Proc. Amer. Math. Soc. \textbf{104} (1988), no.~3, 675--684.

\bibitem{Lin93}
P.~A. Linnell, \emph{Division rings and group von Neumann algebras}, Forum Math. \textbf{5} (1993), no.~6, 561--576.

\bibitem{LT79}
R.~J. Lipton and R.~E. Tarjan, \emph{A separator theorem for planar graphs}, SIAM J. Appl. Math. \textbf{36} (1979), no.~2, 177--189.

\bibitem{Lue02}
W.~L\"uck, \emph{$L^2$-Invariants: Theory and Applications to Geometry and $K$-Theory}, Ergebnisse der Mathematik und ihrer Grenzgebiete (3), vol.~44, Springer, Berlin, 2002.

\bibitem{MS26}
I.~Mian and S.~Siddique, \emph{A machine-checked proof that Gardam's $\tilde A_2$ lattice does not have unique products}, preprint, 2026. \href{https://arxiv.org/abs/2609.22380v1}{arXiv:2609.22380v1}.

\bibitem{Min24}
I.~Mineyev, \emph{The topology and geometry of units and zero-divisors: Origami}, preprint, 2024, revised 2026.

\bibitem{OAI26}
OpenAI, \emph{A torsion-free group algebra with zero divisors}, preprint, September 23, 2026.

\bibitem{Oll05}
Y.~Ollivier, \emph{A January 2005 Invitation to Random Groups}, Ensaios Matem\'aticos, vol.~10, Sociedade Brasileira de Matem\'atica, Rio de Janeiro, 2005.

\bibitem{Oll06}
Y.~Ollivier, \emph{On a small cancellation theorem of Gromov}, Bull. Belg. Math. Soc. Simon Stevin \textbf{13} (2006), no.~1, 75--89.

\bibitem{RS87}
E.~Rips and Y.~Segev, \emph{Torsion-free group without unique product property}, J. Algebra \textbf{108} (1987), no.~1, 116--126.

\bibitem{San81}
R.~Sandling, \emph{Graham Higman's thesis ``Units in group rings''}, in: Integral Representations and Applications (Oberwolfach, 1980), Lecture Notes in Mathematics, vol.~882, Springer, 1981, pp.~93--116.

\bibitem{Shi26}
H.~Shin, \emph{A global girth obstruction for Garg--Mineyev taiko product structures}, preprint, 2026. \href{https://arxiv.org/abs/2607.01716v1}{arXiv:2607.01716v1}.

\bibitem{Ste15}
M.~Steenbock, \emph{Rips--Segev torsion-free groups without the unique product property}, J. Algebra \textbf{438} (2015), 337--378.

\end{thebibliography}
\end{document}